\newcommand{\subopt}{{\mathsf{subopt}}}
\newcommand{\gap}{{\mathsf{gap}}}
\newcommand{\primaldual}{{\mathsf{primaldual}}}
\newcommand{\nuc}{\mathsf{nuc}}
\def\Rfwt{{R}}
\def\fwt{{S}}
\def\vv{{\mathbf{v}}}
\def\xx{{\mathbf{x}}}
\def\yy{{\mathbf{y}}}
\def\0{\mathbf{0}}
\def\1{\mathbf{1}}
\newcommand{\N}{\mathbb{N}}
\newcommand{\R}{\mathbb{R}}
\newcommand\cC{\mathcal{C}}
\newcommand\cI{{\ensuremath{\mathcal{I}}}}
\newcommand\cO{{\ensuremath{\mathcal{O}}}}
\DeclareMathOperator{\argmin}{arg\,min}
\def\fw{{\textnormal{\texttt{FW}}}}
\newcommand{\srb}[1]{\left( #1 \right)}
\theoremstyle{plain} \numberwithin{equation}{section}
\newtheorem{theorem}{Theorem}[section]
\numberwithin{theorem}{section}
\newtheorem{lemma}[theorem]{Lemma}
\newtheorem{remark}[theorem]{Remark}
\newcommand{\hrulealg}[0]{\vspace{1mm} \hrule \vspace{1mm}}
\begin{document}

\title{Adaptive Open-Loop Step-Sizes for Accelerated Convergence Rates of the Frank-Wolfe Algorithm}

\author{ Elias Wirth\thanks{Technical University of Berlin} \and Javier Pe\~na\thanks{Carnegie Mellon University} \and Sebastian Pokutta\thanks{Zuse Institute Berlin and Technical University of Berlin}}

\date{May 15, 2025}

\maketitle

\abstract{
Recent work has shown that in certain settings, the Frank-Wolfe algorithm (\fw{}) with open-loop step-sizes $\eta_t = \frac{\ell}{t+\ell}$ for a fixed parameter $\ell\in\N_{\ge 2}$, attains a convergence rate faster than the traditional $\cO(t^{-1})$ rate.  In particular, when a {\em strong growth property} holds, the convergence rate attainable with open-loop step-sizes $\eta_t = \frac{\ell}{t+\ell}$ is $\cO(t^{-\ell})$.  In this setting there is no single value of the parameter $\ell$ that prevails as superior.    

This paper shows that \fw{} with log-adaptive open-loop step-sizes $\eta_t = \frac{2+\log(t+1)}{t+2+\log(t+1)}$ attains a convergence rate that is at least as fast as that attainable with fixed-parameter open-loop step-sizes $\eta_t = \frac{\ell}{t+\ell}$ for any value of $\ell\in \N_{\ge 2}$.  To establish our main convergence results, we extend our previous affine-invariant accelerated convergence results for \fw{} to more general open-loop step-sizes $\eta_t = \frac{g(t)}{t+g(t)}$, where $g\colon\N\to\R_{\geq 0}$ is any non-decreasing function such that the sequence of step-sizes $\eta_t = \frac{g(t)}{t+g(t)}$ is non-increasing. This covers in particular the fixed-parameter case by choosing $g(t) = \ell$ and the log-adaptive case by choosing $g(t) = 2+ \log(t+1)$.

To facilitate the adoption of log-adaptive open-loop step-sizes, we have incorporated the log-adaptive step-size into the \texttt{FrankWolfe.jl} software package.
}

\section{Introduction}
We consider constrained convex optimization problems of the form
\begin{equation}\label{eq.opt}\tag{OPT}
    \min_{\xx\in\cC}f(\xx),
\end{equation}
where $\cC\subseteq\R^n$ is a compact convex set and $f\colon \cC \to \R$ is a convex and smooth function. When projection onto $\cC$ is hard,~\eqref{eq.opt} can be tackled with the \emph{Frank-Wolfe algorithm} (\fw) \cite{frank1956algorithm}, otherwise known as the \emph{conditional gradients algorithm} \cite{levitin1966constrained}. \fw{}, presented in Algorithm~\ref{alg:fw}, relies only on a \emph{linear minimization oracle} (LMO) to access the feasible region $\cC$, which is often much cheaper than a projection oracle onto $\cC$~\cite{combettes2021complexity}. During each iteration, \fw{} computes the vertex $\vv^{(t)}$ best-aligned with the negative gradient of the objective and takes a step in the direction $\vv^{(t)} - \xx^{(t)}$ of length $\eta_t$, where $\eta_t\in [0, 1]$ is determined according to a step-size rule. Popular step-size rules include
line-search $\eta_t \in \argmin_{\eta \in [0, 1]} f(\xx^{(t)} + \eta (\vv^{(t)} - \xx^{(t)}))$,
short-step $\eta_t = \min\left\{1,\frac{\langle \nabla f(\xx^{(t)}), \xx^{(t)} -\vv^{(t)} \rangle}{L\|\xx^{(t)} -\vv^{(t)}\|_2^2}\right\}$, adaptive \cite{pedregosa2018step}, and fixed-$\ell$ open-loop $\eta_t = \frac{\ell}{t+\ell}$ for $\ell\in\N_{\geq 1}$ \cite{dunn1978conditional,wirth2023acceleration,wirth2023accelerated}.

Under mild assumptions, \fw{} with any of the above choices of step-size $\eta_t$ achieves a convergence rate of $f(\xx^{(t)}) - \min_{\xx\in\cC} f(\xx) = \cO(t^{-1})$ \cite{jaggi2013revisiting}. Furthermore, \fw{} admits several attractive properties: The algorithm and its variants \cite{garber2016linear,holloway1974extension, lacoste2015global,  tsuji2022pairwise,wolfe1970convergence} are easy to implement, first-order, projection-free, affine-covariant\footnote{For details, see Definition~1.1 of \cite{wirth2023accelerated}.}, and their iterates are sparse convex combinations of vertices of the feasible region.
Therefore, \fw{} variants have been leveraged in a variety of settings: approximate vanishing ideal computations \cite{ wirth2023approximate,wirth2022conditional}, computer vision \cite{alayrac2016unsupervised, bojanowski2015weakly, joulin2014efficient,
    miech2017learning,
    peyre2017weakly,  seguin2016instance},
kernel herding \cite{bach2012equivalence, baskaran2022distribution, lacoste2015sequential, tsuji2022pairwise},
matrix completion \cite{allen2017linear, freund2017extended,  garber2018fast, mu2016scalable, rezaei2017background, shalev2011large},
submodular function optimization \cite{bach2013learning, bian2017guaranteed, mokhtari2018conditional}, tensor completion \cite{bugg2022nonnegative,guo2017efficient}, and the
training of support vector machines \cite{ clarkson2012sublinear, lacoste2013block, osokin2016minding,ouyang2010fast}.
See \cite{braun2022conditional} for a recent survey.

\begin{algorithm}[!t]
    \caption{Frank-Wolfe algorithm (\fw{})}\label{alg:fw}
	\begin{algorithmic}        
    \State{\bf Input:} {$\xx^{(0)}\in\cC$.
    }
    \State{\bf Output:} {$\xx^{(t)}\in\cC$ for $t\in\N$.}
    \hrulealg
    \For{$t\in\N$}
    \State {$\vv^{(t)} \gets \argmin_{\vv\in \cC} \langle \nabla f(\xx^{(t)}), \vv \rangle$ }
    \State {$\xx^{(t+1)} \gets \xx^{(t)} + \eta_t (\vv^{(t)}- \xx^{(t)})$
    for some $\eta_t \in [0,1]$}
    \EndFor 
\end{algorithmic}    
\end{algorithm}

In recent years, \fw{} with fixed-$\ell$ open-loop step-sizes of the form
\begin{align}\label{eq.open_loop_old}
    \eta_t = \frac{\ell}{t+\ell}
\end{align}
for some $\ell\in\N_{\geq 2}$ has been studied more closely \cite{carderera2021simple, dunn1978conditional, lan2013complexity,  li2021momentum}.
Formally speaking, an open-loop step-size is a sequence $(\eta_t)_{t\in\N}$ of step-sizes used in an iterative optimization algorithm, where each $\eta_t$ is determined independently of the optimization problem's parameters and the algorithm's current state or iterate values.
This is in stark contrast to \fw{} with line-search, which requires feedback from the objective function or short-step and adaptive step-sizes \cite{garber2015faster, ghadimi2019conditional, kerdreux2021projection, nesterov2018complexity}, which require prior knowledge of challenging-to-compute parameters of the optimization problem~\eqref{eq.opt}.  The recent articles by Wirth et al.~\cite{wirth2023acceleration, wirth2023accelerated} characterize several settings for which \fw{} with open-loop step-sizes admits convergence rate of order $\cO(t^{-2})$ or faster.  We summarize the main results in~\cite{wirth2023accelerated} and their relation to the developments in this paper in Table~\ref{table:results}.  In particular, when the strong $(M, 1)$-growth property holds, \fw{} with $\eta_t = \frac{\ell}{t+\ell}$ converges at a rate of order $\cO(t^{-\ell})$.
Thus, the long-considered `default' open-loop step-size
\begin{align*}
    \eta_t = \frac{2}{t+2}
\end{align*}
is not optimal. Indeed, no fixed-$\ell$ step-size of the form~\eqref{eq.open_loop_old} can be optimal, as increased $\ell$ leads to increased convergence speed in the strong $(M, 1)$-growth setting.

\begin{table}[t!]
    \centering
    \resizebox{\textwidth}{!}{
    \begin{tabular}{@{}l@{\quad}c@{\quad}l@{\quad}c@{\quad}l@{}} 
        \toprule
        Growth setting & $\eta_t = \frac{\ell}{t+\ell}$ & In \cite{wirth2023accelerated} & $\eta_t = \frac{2+\log(t+1)}{t+2+\log(t+1)}$ & This paper \\ 
        \midrule
        Strong $(M, 1)$ & $\cO(t^{-\ell})$ & Theorem~3.2 & $\tilde\cO(t^{-k})$ for any $k\in \N$ & Theorem~\ref{thm:strong_M_r_growth} \\ 
        Strong $(M, r)$ & $\cO(t^{-\ell+\epsilon}+t^{-\frac{1}{1-r}})$ & Theorem~3.4 & $\tilde\cO(t^{-\frac{1}{1-r}})$ & Theorem~\ref{thm:strong_M_r_growth} \\ 
        Weak $(M, r)$ & $\cO(t^{-\ell+\epsilon}+t^{-\frac{1}{1-r}} +t^{-2})$ & Theorem~4.1 & $\tilde\cO(t^{-\frac{1}{1-r}}+t^{-2})$ & Theorem~\ref{thm:rate-weak} \\ 
        \bottomrule
    \end{tabular}
    }
    \caption{Comparison of open-loop step-sizes for \fw{}. Growth settings are detailed in~\eqref{eq.strong_gp} and~\eqref{eq.weak_gp}. In the table, $r\in [0, 1[$, $\ell\in\N_{\geq 2}$, $\epsilon \in]0, \ell[$, $\fwt\in\N_{\geq 1}$ has to be chosen large enough according to conditions outlined in the corresponding theorems, and  $\tilde\cO(\cdot)$ ignores polylogarithmic factors. Rates always hold for the suboptimality gap $\subopt_t$. In the strong growth settings, results extend to the primal-dual suboptimality gap $\primaldual_t$.}
    \label{table:results}
\end{table}

The main goal of this paper is to revisit open-loop step-sizes and develop a modern optimal `default' open-loop step-size that performs at least as well as any open-loop step-size of the form \eqref{eq.open_loop_old} for any $\ell\in\N_{\geq 2}$ up to polylogarithmic factors. To that end, we move away from fixed-$\ell$ open-loop step-sizes and instead study \emph{adaptive open-loop step-sizes} of the form
\begin{align}\label{eq.new_olss}
    \eta_t = \frac{g(t)}{t+g(t)},
\end{align}
where $g\colon\N\to \R_{\geq 2}$. We are particularly interested in functions $g$ that gradually increase as $t$ increases, so as to achieve optimal convergence rates in the strong $(M,1)$-growth and strong $(M,r)$-growth settings.
Our contributions are as follows:
\begin{enumerate}
    \item \textbf{Convergence assumptions:} We extend the techniques introduced in~\cite{wirth2023accelerated} to develop affine-invariant convergence rates for \fw{} with open-loop step-sizes of the form \eqref{eq.new_olss}.  To that end, we identify the following two key assumptions about the function $g$:
    
    \begin{enumerate}
\item[A1.]  The function $g:\N\rightarrow \R_{\ge 2}$ is non-decreasing.
\item[A2.]  For all $t\in \N$ it holds that $\frac{t}{g(t)}  \le \frac{t+1}{g(t+1)}$ or equivalently  $\frac{g(t+1)}{t+1+ g(t+1)} \le \frac{g(t)}{t+g(t)}$.
\end{enumerate}

 These assumptions are crafted to ensure, and under suitable conditions accelerate, the convergence of \fw{} with adaptive open-loop step-sizes.

    \item \textbf{Log-adaptive open-loop step-size:} We advocate for a new standard in open-loop step-sizes: the \emph{log-adaptive open-loop step-size}, defined as
          \begin{align}\label{eq.default_olss}
              \eta_t = \frac{2 + \log(t+1)}{t+2 + \log(t+1)}
          \end{align}
          and first mentioned in a remark in \cite{pokutta2023frankwolfe}.
          This step-size, derived by setting $g(t) = 2 + \log(t+1)$ in \eqref{eq.new_olss}, not only matches but, in certain growth settings, surpasses the convergence rates of all fixed-$\ell$ step-sizes, up to polylogarithmic factors. Specifically, it offers arbitrarily fast sublinear rates in the strong $(M,1)$-growth setting, and more efficient rates in the strong $(M, r)$-growth setting compared to fixed-$\ell$ step-sizes. See Table~\ref{table:results} for an overview.
    \item \textbf{Software:} To facilitate widespread adoption, we integrate step-sizes of the form \eqref{eq.new_olss} into the \texttt{FrankWolfe.jl} package \cite{besanccon2022frankwolfe}. In addition, we report some numerical experiments that corroborate the theoretical findings.
\end{enumerate}

The main sections of the paper are organized as follows.  Section~\ref{sec.blueprint} contains a blueprint that underlies the main convergence results in~\cite{wirth2023accelerated} for fixed-parameter open-loop step-sizes and subsequently describe a modification of this blueprint that underlies the results for adaptive open-loop step-sizes developed in this paper.  Section~\ref{sec.strong_growth} and Section~\ref{sec.weak_growth} detail the proofs of our two main technical results, namely Theorem~\ref{thm:strong_M_r_growth} and Theorem~\ref{thm:rate-weak}. Section~\ref{sec.experiments} presents some numerical experiments that illustrate the performance of adaptive open-loop step-sizes versus fixed-parameter open-loop step-sizes.

\medskip

We conclude this introduction with a simple but important observation.  Some straightforward calculations show that both Assumptions A1 and A2 are satisfied for the following choices of $g:\N\rightarrow \R_{\ge 2}$:
\begin{itemize}
\item $g(t) = \ell$ for some $\ell \in \R_{\ge 2}$,
\item $g(t) = 2+\log(t+1)$.
\end{itemize}
The above two choices of $g$ yield, respectively, fixed-parameter open-loop step-sizes $\eta_t = \frac{\ell}{t+\ell}$ and log-adaptive open-loop step-sizes $\eta_t = \frac{2+\log(t+1)}{t+2+\log(t+1)}$.  The latter will play a particularly important role in the sequel.

\section{Framework and General Convergence Blueprint}\label{sec.blueprint}

\subsection{Preliminaries} 
Throughout, let $\cC\subseteq \R^n$ be a compact convex set and let $f\colon \cC \to \R$ be convex and differentiable in an open set containing $\cC$.
Recall that the \emph{Bregman divergence} $D_f:\cC\times\cC\rightarrow \R$  of $f$ is defined as
\begin{align*}
    D_f(\yy,\xx) = f(\yy) - f(\xx) - \left\langle\nabla f\left(\xx\right), \yy-\xx \right\rangle.
\end{align*}
The {\em Frank-Wolfe gap} function
$\gap:\cC\rightarrow \R$ is defined as follows.  For $\xx\in\cC$ let $\vv \in \argmin_{\yy\in \cC}\left\langle\nabla f\left(\xx\right), \yy-\xx \right\rangle$ and
\begin{align*}
    \gap(\xx):= \left\langle\nabla f\left(\xx\right), \xx - \vv\right\rangle.
\end{align*}
A straightforward algebraic manipulation shows that for $\xx\in\cC$ we have 
$$\vv \in \argmin_{\yy\in \cC}\left\langle\nabla f\left(\xx\right), \yy-\xx \right\rangle,$$ and $\eta \in [0,1]$ the following objective reduction identity holds: 
\begin{equation}\label{eq.gap}
f(\xx+\eta(\vv-\xx))  = f(\xx) -\eta\gap(\xx) + D_f(\xx+\eta(\vv-\xx),\xx)
\end{equation}
Recall the following strong and weak growth properties introduced in \cite{Pena23,wirth2023accelerated}.  Suppose $M > 0$ and $r\in [0,1]$.
    \begin{enumerate}
        \item We say that the pair $(\cC,f)$ satisfies the
        \emph{strong $(M,r)$-growth property} if for all $\xx \in \cC$ and $\vv \in \argmin_{\yy\in \cC}\left\langle\nabla f\left(\xx\right), \yy-\xx \right\rangle$, it holds that
              \begin{align}\label{eq.strong_gp}
                  D_f(\xx+\eta(\vv-\xx),\xx) \leq \frac{M \eta^2}{2} \gap(\xx)^r \qquad \text{for all} \ \eta \in [0, 1].
              \end{align}
        \item We say that the pair $(\cC,f)$ satisfies the {\em weak $(M,r)$-growth property} if for all $\xx \in \cC$ and $\vv \in \argmin_{\yy\in \cC}\left\langle\nabla f\left(\xx\right), \yy-\xx \right\rangle$, it holds that
              \begin{align}\label{eq.weak_gp}
                  D_f(\xx+\eta(\vv-\xx),\xx)\cdot {\subopt(\xx)}^{1-r} \leq \frac{M \eta^2}{2} \gap(\xx) \qquad \text{for all} \ \eta \in [0, 1].
              \end{align}
    \end{enumerate}

The papers~\cite{Pena23,wirth2023accelerated} describe several popular classes of problem instances  $(\cC,f)$ that satisfy the above growth properties.

\subsection{Blueprint for convergence results}

The main convergence results for fixed-parameter open-loop step-sizes $\eta_t = \frac{\ell}{t+\ell}$  with $\ell \in \N_{\ge2}$ in~\cite{wirth2023accelerated}, namely~\cite[Theorem 3.2, Theorem 3.4, Theorem 4.1]{wirth2023accelerated}, follow by putting together the following three ingredients:
\begin{enumerate}
\item Cumulative  product bound: for $S\in \N_{\ge 1},\; \epsilon \in ]0,\ell[,$ and $t\in \N_{\ge S}$,
\[
\prod_{i=S}^t \left(1-\left(1-\frac{\epsilon}{\ell}\right) \eta_i\right)  \leq 
\left(\frac{\eta_t}{\eta_{S-1}} \right)^{\ell - \epsilon} \exp\left(\frac{\epsilon \ell}{S}\right).
\]

\item Objective reduction identity~\eqref{eq.gap}.

\item Strong $(M,r)$ growth~\eqref{eq.strong_gp} or weak $(M,r)$ growth~\eqref{eq.weak_gp} for some $M>0$ and $r\in [0,1[$.

\end{enumerate}

Wirth et al.~\cite{wirth2023accelerated} showed that the combination of the above three ingredients yields the following accelerated convergence rates for \fw{} with step-sizes  $\eta_t = \frac{\ell}{t+\ell}:$ 
$\cO(t^{-\ell+\epsilon} + t^{-\frac{1}{1-r}})$ when the strong growth property holds~\cite[Theorem 3.4]{wirth2023accelerated}
and $\cO(t^{-2} + t^{-\frac{1}{1-r}})$ when the weak growth property holds~\cite[Theorem 4.1]{wirth2023accelerated}.

\medskip

In this paper we extend the above blueprint by modifying the first ingredient: For $\eta_t = \frac{g(t)}{t+g(t)}$ where $g:\N \rightarrow \R_{\ge 1}$ satisfies Assumptions A1 and A2, the first ingredient above can be extended as follows.
\begin{itemize}
\item New cumulative  product bound: for $S\in \N_{\ge 1},\; \epsilon \in ]0,g(S)[,$ and $t\in \N_{\ge S}$,
\[
\prod_{i=S}^t \left(1-\left(1-\frac{\epsilon}{g(t)}\right) \eta_i\right)  \leq 
\left(\frac{\eta_t}{\eta_{S-1}} \right)^{g(S) - \epsilon}. 
\]
\end{itemize}

For $g(t) = 2+\log(t+1)$ the new cumulative  product bound allows us to tweak our previous proof techniques to show the convergence rate $\tilde \cO(t^{-\frac{1}{1-r}})$ when the strong growth property holds (Theorem~\ref{thm:strong_M_r_growth}) and $\tilde \cO(t^{-2} + t^{-\frac{1}{1-r}})$ when the weak growth property holds (Theorem~\ref{thm:rate-weak}) .  In this notation $\tilde \cO(\cdot)$ hides some polylogarithmic factors.

Our convergence results  give bounds on the following two optimality measures. The first one is the
{\em primal suboptimality gap} function $\subopt\colon\cC\rightarrow \R$, defined as
\begin{align*}
    \subopt(\xx):= f(\xx) - \min_{\yy \in \cC} f(\yy).
\end{align*}
The second one is the \emph{best primal-dual suboptimality gap} defined as follows.  For a sequence $\xx^{(t)}\in\cC, \; t = 0,1,\dots$ define
\begin{align*}
    \primaldual(\xx^{(t)}) = \min_{k\in\{0,1,\ldots,t\}}\left\{f(\xx^{(t)}) - f(\xx^{(k)}) + \gap(\xx^{(k)})\right\}.
\end{align*}
We will write
$\gap_t$, $\primaldual_t$, and $\subopt_t$
as shorthands for $\gap(\xx^{(t)})$, $\primaldual(\xx^{(t)})$, and $\subopt(\xx^{(t)})$, respectively, when $\xx^{(t)}$ is the $t$-th iterate generated by Algorithm~\ref{alg:fw}.  In this case, as detailed in~\cite{wirth2023accelerated}, it holds that
\begin{align*}
    \subopt_t \leq \primaldual_t \leq \gap_t.
\end{align*}
Furthermore, identity~\eqref{eq.gap} and some straightforward calculations show that the \fw{} iterates also satisfy
\begin{align}\label{eq.fw-step}
    \subopt_{t+1} = \subopt_t -\eta_t \gap_t + D_f(\xx^{(t)}+\eta_t(\vv^{(t)}-\xx^{(t)}),\xx^{(t)}),
\end{align}
and
\begin{align}\label{eq.pridualgap}
    \primaldual_{t+1} \leq \primaldual_t-\eta_t \gap(\xx^{(t)}) + D_f(\xx^{(t)}+\eta_t(\vv^{(t)}-\xx^{(t)}),\xx^{(t)}).
\end{align}

\section{Strong growth setting}\label{sec.strong_growth}

We present the main technical lemma of the manuscript.

\begin{lemma}[Strengthened cumulative-product bound]\label{eq.main_technical.prod_sum_advanced}
Let $g: \N \to \R_{\geq 2}$ be non-decreasing and let $\eta_t:=\frac{g(t)}{t+g(t)}$ for $t\in \N$.  For integers $1 \leq S \leq t$ and any
$\epsilon\in]0, g(S)[$ the following holds
\begin{equation}\label{eq:cpb-short}
    \prod_{i=S}^{t}
    \left(1-\left(1-\frac{\epsilon}{g(i)}\right)\eta_i\right)
    \leq 
    \left(\frac{\eta_t}{\eta_{S-1}}\right)^{g(S)-\epsilon}.
\end{equation}
\begin{proof}
For ease of notation let $k:=g(S)$.
Since $g(i)\ge g(S) = k$ for all $i\ge S$, each factor satisfies
\[
    1-\left(1-\frac{\epsilon}{g(i)}\right)\eta_i
    = \frac{i+\epsilon}{i+g(i)}
    \leq \frac{i+\epsilon}{i+k)} = 1 - \frac{k-\epsilon}{i+k}.
\]
Hence if we set 
\[
    P:=\prod_{i=S}^t\left(1-\left(1-\frac{\epsilon}{g(i)}\right)\eta_i\right),
\]
the inequality $\log(1-x)\leq -x$ for $x<1$ implies that
\[
\log P
    \leq
    \sum_{i=S}^t
 \log\left(1 - \frac{k-\epsilon}{i+k}\right)
    \leq
    -\sum_{i=S}^{t}\frac{k-\epsilon}{i+k}.
\]
Thus
\begin{equation}
\label{eq:log_P.again}    
\log P
    \leq
    -(k-\epsilon)\sum_{i=S}^t\frac{1}{i+k}
    = -(k-\epsilon)\sum_{j=S+k}^{t+k}\frac{1}{j}
    \leq
    -(k-\epsilon)\log\frac{t+k}{S-1+k}.
\end{equation}
Since $g$ is non-decreasing and $\eta_i =\frac{g(i)}{i+g(i)}$ for $i\in \N$ it follows that 
\[
\frac{S-1+k}{t+k} = \frac{S-1+g(S)}{t+g(S)} \le  \frac{g(t)(S-1+g(S-1))}{(t+g(t))g(S-1)} =\frac{\eta_t}{\eta_{S-1}}
\] 
because $g(S-1) \le g(S) \le g(t)$ implies that 
\begin{align*}
&g(S-1)((S-1)t+(S-1)g(t)+g(S)t) \le g(t)((S-1)t+(S-1)g(S)+g(S-1)t) \\&\Leftrightarrow 
(S-1+g(S))(t+g(t))g(S-1)\le  g(t)(S-1+g(S-1))(t+g(S))\\&\Leftrightarrow 
\frac{S-1+g(S)}{t+g(S)} \le  \frac{g(t)(S-1+g(S-1))}{(t+g(t))g(S-1)}.
\end{align*}
Thus exponentiating~\eqref{eq:log_P.again} gives
\[
    P \leq
    \exp\left(-(k-\epsilon)\log\frac{t+k}{S-1+k}\right)
    \leq
    \exp\left((k-\epsilon)\log\frac{\eta_t}{\eta_{S-1}}\right)
    = \left(\frac{\eta_t}{\eta_{S-1}}\right)^{k-\epsilon},
\]
which is exactly \eqref{eq:cpb-short}.
\end{proof}
\end{lemma}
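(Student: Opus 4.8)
The plan is to bound the product by taking logarithms, replacing each factor by something that no longer depends on $g(i)$ for $i > S$, summing the resulting harmonic-type series, and finally re-expressing the telescoped bound in terms of the step-sizes $\eta_t$ and $\eta_{S-1}$. Write $k := g(S)$. The first step is a pointwise comparison: for each index $i \geq S$, since $g$ is non-decreasing we have $g(i) \geq k$, and a direct computation rewrites the factor as
\[
1-\left(1-\frac{\epsilon}{g(i)}\right)\eta_i = \frac{i+\epsilon}{i+g(i)} \leq \frac{i+\epsilon}{i+k} = 1-\frac{k-\epsilon}{i+k}.
\]
This is the crucial monotonicity observation; everything after it is elementary. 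Applying $\log(1-x)\leq -x$ (valid since $\epsilon < k$ guarantees $\frac{k-\epsilon}{i+k}<1$) and summing over $i=S,\dots,t$ gives
\[
\log P \leq -(k-\epsilon)\sum_{i=S}^{t}\frac{1}{i+k} = -(k-\epsilon)\sum_{j=S+k}^{t+k}\frac{1}{j} \leq -(k-\epsilon)\log\frac{t+k}{S-1+k},
\]
where the last inequality uses the standard integral lower bound $\sum_{j=a}^{b}\frac{1}{j}\geq \log\frac{b+1}{a}\geq \log\frac{b}{a-1}$ on the harmonic sum.

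The remaining work is to show that the telescoped ratio $\frac{S-1+k}{t+k}$ is at most $\frac{\eta_t}{\eta_{S-1}}$, so that exponentiating yields the claimed bound $P \leq \left(\frac{\eta_t}{\eta_{S-1}}\right)^{k-\epsilon}$. Here I would substitute $k = g(S)$ in the left side and $\eta_t/\eta_{S-1} = \frac{g(t)(S-1+g(S-1))}{(t+g(t))g(S-1)}$ on the right, then verify the inequality
\[
\frac{S-1+g(S)}{t+g(S)} \leq \frac{g(t)(S-1+g(S-1))}{(t+g(t))g(S-1)}
\]
by clearing denominators and checking that it reduces to a polynomial inequality in $S,t,g(S-1),g(S),g(t)$ that follows from the two monotonicity relations $g(S-1)\leq g(S)$ and $g(S)\leq g(t)$. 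Cross-multiplying and cancelling common terms, the inequality collapses to something of the form $g(S-1)\cdot(\text{nonneg.})\leq g(t)\cdot(\text{same nonneg.})$ plus lower-order comparisons, all of which are immediate once $g$ is non-decreasing.

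The main obstacle is the last algebraic step: one must be careful about which of $g(S-1)$, $g(S)$, $g(t)$ multiplies which term after clearing the three denominators $t+g(S)$, $t+g(t)$, $g(S-1)$, and make sure the net inequality is genuinely a consequence of monotonicity rather than requiring, say, concavity or the sharper Assumption A2. (Note that A2 is not invoked here — only A1, that $g$ is non-decreasing.) I expect the cleanest route is to write the target inequality as $(S-1+g(S))(t+g(t))g(S-1) \leq g(t)(S-1+g(S-1))(t+g(S))$, expand both sides, cancel the terms that appear identically on both sides, and then match the surviving terms so that each term on the left is dominated by a corresponding term on the right using $g(S-1)\leq g(S)\leq g(t)$; this bookkeeping is routine but is where an error would most easily creep in. Everything else — the factor rewriting, the $\log(1-x)\leq -x$ bound, the harmonic-sum estimate, and the final exponentiation — is standard.
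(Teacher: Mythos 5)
Your proposal follows the paper's proof step for step: the same substitution $k=g(S)$, the same pointwise bound $\frac{i+\epsilon}{i+g(i)}\le\frac{i+\epsilon}{i+k}$, the same $\log(1-x)\le -x$ and harmonic-sum estimate, and the same target polynomial inequality $(S-1+g(S))(t+g(t))g(S-1)\le g(t)(S-1+g(S-1))(t+g(S))$. The one step you leave as ``routine bookkeeping'' does indeed close cleanly: after cancelling the common term $g(S-1)g(S)g(t)$, you are left to compare $g(S-1)(S-1)t \le g(t)(S-1)t$, $g(S-1)(S-1)g(t) \le g(t)(S-1)g(S)$, and $g(S-1)g(S)t \le g(t)g(S-1)t$, each of which holds termwise from $g(S-1)\le g(S)\le g(t)$ alone, confirming your observation that only A1 is needed.
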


Among the settings we cover, the strong growth setting permits the most acceleration of \fw{}.

\begin{theorem}[Strong $(M,r)$-growth]\label{thm:strong_M_r_growth}
    Let $\cC\subseteq \R^n$ be a compact convex set, let $f\colon\cC\to\R$ be convex and differentiable in an open set containing $\cC$, let $g\colon\N\to\R_{\geq 2}$ satisfy Assumptions~A1~and~A2, and suppose that $(\cC, f)$ satisfies the strong $(M,r)$-growth property for some $M > 0$ and $r\in[0, 1[$. Let $\fwt\in\N_{\geq 1}$, $\epsilon \in ]0, g(\fwt) [$, and $k := \min\{g(\fwt)-\epsilon, \frac{1}{1-r}\}$.
    Then, for the iterates of Algorithm~\ref{alg:fw} with step-size $\eta_t = \frac{g(t)}{t+g(t)}$ and all $t\in\N_{\geq \fwt}$, it holds that
    \begin{align}\label{eq.rate_strong}
        \primaldual_{t} &\leq   \max\left\{ \primaldual_\fwt \left(\frac{\eta_{t-1}}{\eta_{\fwt-1}}\right)^{ g(\fwt)-\epsilon}, \left(\frac{M g(t-1)}{2\epsilon}\right)^{\frac{1}{1-r}}  \eta_{t-1}^{k}\right\}\notag \\& = \cO(g(t-1)^{\frac{1}{1-r}} \eta_{t-1}^{k}).
    \end{align}
    
In particular, for $g(t) = 2+\log(t+1), \; \epsilon = 1,\; S = \lceil\exp(\frac{1}{1-r})\rceil,$ and $t\in \N_{\ge S}$ it holds that
\begin{equation}\label{eq.adapt.strong}
\primaldual_{t} = \cO(\log(t)^{\frac{1}{1-r}} \eta_{t-1}^{\frac{1}{1-r}}) = \tilde \cO(t^{-\frac{1}{1-r}}). 
\end{equation}
\end{theorem}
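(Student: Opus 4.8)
The plan is to prove Theorem~\ref{thm:strong_M_r_growth} by an induction on $t\ge \fwt$, following the blueprint described in Section~\ref{sec.blueprint}: combine the strengthened cumulative-product bound (Lemma~\ref{eq.main_technical.prod_sum_advanced}), the primal-dual one-step recursion~\eqref{eq.pridualgap}, and the strong $(M,r)$-growth property~\eqref{eq.strong_gp}. Writing $h_t := \primaldual_t$, the first step is to massage the one-step inequality~\eqref{eq.pridualgap} together with~\eqref{eq.strong_gp} into the form
\begin{equation*}
h_{t+1} \le h_t - \eta_t \gap_t + \frac{M\eta_t^2}{2}\gap_t^r.
\end{equation*}
Since $h_t \le \gap_t$, and since when $\gap_t$ is large the $-\eta_t\gap_t$ term dominates the $+\tfrac{M\eta_t^2}{2}\gap_t^r$ term, one shows $h_{t+1} \le \bigl(1-(1-\tfrac{\epsilon}{g(t)})\eta_t\bigr) h_t$ whenever $h_t$ exceeds the threshold $\bigl(\tfrac{M g(t)}{2\epsilon}\bigr)^{1/(1-r)}\eta_t^{1/(1-r)}$ (roughly: in that regime $\tfrac{M\eta_t^2}{2}\gap_t^r \le \tfrac{\epsilon}{g(t)}\eta_t\gap_t$). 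This is the dichotomy that drives the whole argument and is essentially the same bookkeeping as in~\cite[Theorem 3.4]{wirth2023accelerated}; the only change is that the contraction factor now uses $g(t)$ rather than a fixed $\ell$.

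Next I would run the induction on the claimed bound~\eqref{eq.rate_strong}. In the base case $t=\fwt$ the right-hand side is at least $h_\fwt$ since $\eta_{\fwt-1}/\eta_{\fwt-1}=1$. For the inductive step, split into two cases according to the dichotomy above. If $h_t$ is below the threshold, then $h_{t+1} \le h_t + \tfrac{M\eta_t^2}{2}\gap_t^r$ has to be controlled directly; here one uses Assumptions A1--A2 to guarantee that the threshold sequence $\bigl(\tfrac{Mg(t)}{2\epsilon}\bigr)^{1/(1-r)}\eta_t^{k}$ does not shrink too fast relative to the one-step increment, so that $h_{t+1}$ still lies below the next threshold value (this is exactly where the exponent $k=\min\{g(\fwt)-\epsilon,\tfrac{1}{1-r}\}$ and the monotonicity $\tfrac{t}{g(t)}\le\tfrac{t+1}{g(t+1)}$ of A2 are needed — A2 makes $\eta_t$ non-increasing and controls $\eta_{t+1}/\eta_t$). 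If instead $h_t$ is above the threshold, then $h_{t+1}\le\bigl(1-(1-\tfrac{\epsilon}{g(t)})\eta_t\bigr)h_t$, and iterating this back to the last time the iterate was below threshold (or to $\fwt$) and applying Lemma~\ref{eq.main_technical.prod_sum_advanced} with the appropriate starting index converts the product of contraction factors into $(\eta_t/\eta_{\cdot})^{g(\cdot)-\epsilon}$, which combined with the threshold value at that starting time yields the $\bigl(\tfrac{Mg(t-1)}{2\epsilon}\bigr)^{1/(1-r)}\eta_{t-1}^{k}$ term. The clause $g(\fwt)-\epsilon$ in the definition of $k$ is what makes the exponent coming out of the product bound compatible with the exponent in the threshold term.

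The $\cO(g(t-1)^{1/(1-r)}\eta_{t-1}^{k})$ simplification of~\eqref{eq.rate_strong} follows because the first term in the max, $\primaldual_\fwt(\eta_{t-1}/\eta_{\fwt-1})^{g(\fwt)-\epsilon}$, is $\cO(\eta_{t-1}^{g(\fwt)-\epsilon})=\cO(\eta_{t-1}^{k})$ (as $k\le g(\fwt)-\epsilon$), which is dominated by the second term once one notes $g(t-1)\ge 2$. Finally, for the log-adaptive specialization~\eqref{eq.adapt.strong}: plug in $g(t)=2+\log(t+1)$, $\epsilon=1$, and $S=\lceil\exp(\tfrac{1}{1-r})\rceil$. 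The choice of $S$ guarantees $g(S)-1 = 1+\log(S+1) \ge \tfrac{1}{1-r}$, so $k=\min\{g(S)-1,\tfrac{1}{1-r}\}=\tfrac{1}{1-r}$; hence~\eqref{eq.rate_strong} reads $\primaldual_t=\cO\bigl((\log t)^{1/(1-r)}\eta_{t-1}^{1/(1-r)}\bigr)$. Since $\eta_{t-1}=\frac{2+\log t}{t-1+2+\log t}=\Theta\bigl(\tfrac{\log t}{t}\bigr)$, this is $\cO\bigl((\log t)^{1/(1-r)}\cdot(\tfrac{\log t}{t})^{1/(1-r)}\bigr)=\cO\bigl(t^{-1/(1-r)}(\log t)^{2/(1-r)}\bigr)=\tilde\cO(t^{-1/(1-r)})$.

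I expect the main obstacle to be the below-threshold case of the induction: one must verify that crossing from below the threshold never lets $h_{t+1}$ jump above the value the bound prescribes at step $t+1$, which requires a careful quantitative comparison of the one-step increment $\tfrac{M\eta_t^2}{2}\gap_t^r$ against the decrease in the threshold sequence $\bigl(\tfrac{Mg(t)}{2\epsilon}\bigr)^{1/(1-r)}\eta_t^{k}$ from $t$ to $t+1$. This is precisely where Assumptions A1 and A2 must be invoked in full strength — A1 to keep $g$ (and hence the threshold's logarithmic prefactor) from decreasing, and A2 to control the ratio $\eta_{t+1}/\eta_t$ — and getting the constants to line up is the delicate part; everything else is a transcription of the fixed-$\ell$ argument in~\cite{wirth2023accelerated} with $\ell$ replaced by $g(t)$ and the old product bound replaced by Lemma~\ref{eq.main_technical.prod_sum_advanced}.
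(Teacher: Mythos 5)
Your overall architecture matches the paper's: induction from $t=\fwt$, a dichotomy per iteration, a contraction when the gap is big, and iterating the contraction back to the last time the gap was small via Lemma~\ref{eq.main_technical.prod_sum_advanced}. The log-adaptive specialization is also handled the same way. However, there is one genuine gap in your case split.

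You propose to split on $h_t = \primaldual_t$ versus the threshold, and in the below-threshold case you drop the $-\eta_t\gap_t$ term to get $h_{t+1} \le h_t + \tfrac{M\eta_t^2}{2}\gap_t^r$, then try to show this is absorbed by the threshold sequence using A1--A2. This does not work: when $h_t < \text{threshold}(t) \le \gap_t$ (which can happen, since $\primaldual_t \le \gap_t$), the additive bound blows up because $\gap_t^r$ is uncontrolled. The paper instead splits on $\gap_t$ versus the threshold $\bigl(\tfrac{M g(t)\eta_t}{2\epsilon}\bigr)^{1/(1-r)}$. When $\gap_t$ is below this threshold, the relevant inequality is $\primaldual_{t+1} \le (1-\eta_t)\gap_t + \tfrac{M\eta_t^2}{2}\gap_t^r$ (obtained from~\eqref{eq.pridualgap} and $\primaldual_t \le \gap_t$, retaining the negative term), and both terms are bounded by the threshold to give $\primaldual_{t+1} \le \bigl(\tfrac{M g(t)\eta_t}{2\epsilon}\bigr)^{1/(1-r)} \le \bigl(\tfrac{M g(t)}{2\epsilon}\bigr)^{1/(1-r)}\eta_t^k$ directly — in the exact shape required by~\eqref{eq.rate_strong} at time $t+1$, so no comparison of successive thresholds is needed. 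When $\gap_t$ exceeds the threshold, the contraction applies because $\gap_t \ge \primaldual_t$; one then traces back to the smallest $\Rfwt$ with $\gap_i$ above threshold on $\{\Rfwt,\dots,t\}$, applies the product lemma, and uses that either $\Rfwt = \fwt$ (giving the first term in the max) or $\gap_{\Rfwt-1}$ was below threshold, which feeds back into the first case and gives the second term after maximizing over $\Rfwt$. Relatedly, your attribution of A1--A2 to a ''threshold-doesn't-shrink-too-fast'' argument is misplaced: A1 is used to keep $g(\Rfwt-1) \le g(t)$ when maximizing over $\Rfwt$, and A2 is used to argue $\eta_i$ is non-increasing so that the supremum of $\eta_t^{g(\fwt)-\epsilon}/\eta_{\Rfwt-1}^{g(\fwt)-\epsilon-1/(1-r)}$ over $\Rfwt$ equals $\eta_t^k$; neither is used to compare thresholds at consecutive steps. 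Switching your dichotomy from $h_t$ to $\gap_t$ and keeping the $-\eta_t\gap_t$ term would close the gap and land you on the paper's proof.
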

\begin{proof}
    The proof is by induction and an extension of the proof of \cite[Theorem~3.4]{wirth2023accelerated}. First,~\eqref{eq.rate_strong} readily holds for $t  = \fwt$. For the main inductive step, suppose that~\eqref{eq.rate_strong} holds for $t\in\N_{\geq \fwt}$.
    We proceed by considering two possible cases:
    \begin{enumerate}
        \item \label{case:good_progress} Suppose that
              \begin{align}\label{eq.strong_r_gap}
                  \gap_t \leq \left(\frac{M  g(t) \eta_t }{2\epsilon}\right)^{\frac{1}{1-r}}.
              \end{align}
              By~\eqref{eq.pridualgap}, the strong $(M, r)$-strong growth property, and $\primaldual_t\leq\gap_t $, it holds that
              \begin{align}
                  \primaldual_{t+1} & \leq (1-\eta_t) \gap_t + \frac{M\eta_t^2}{2}\gap_t^r\nonumber                                                                                                                                                                                                      \\
                                    & \leq (1 - \eta_t) \left(\frac{M g(t) \eta_t }{2\epsilon}\right)^{\frac{1}{1-r}} + \eta_t \cdot \frac{M\eta_t}{2}\left(\frac{M g(t) \eta_t}{2\epsilon}\right)^{\frac{r}{1-r}}\nonumber                                                                              \\
                                    & \leq (1 - \eta_t) \left(\frac{M g(t) \eta_t }{2\epsilon}\right)^{\frac{1}{1-r}} + \eta_t \cdot \frac{M g(t) \eta_t}{2\epsilon}\left(\frac{M g(t) \eta_t}{2\epsilon}\right)^{\frac{r}{1-r}} & \text{$\triangleright$ because $\epsilon < g(S) \le g(t)$}\nonumber \\
                                    & = \left(\frac{M g(t) \eta_t }{2\epsilon}\right)^{\frac{1}{1-r}} \label{eq.good_growth}                                                                                                                                                                             \\
                                    & \leq \left(\frac{M g(t)}{2\epsilon}\right)^{\frac{1}{1-r}}  \eta_{t}^{k} , \nonumber 
              \end{align}
              where $k = \min\{ g(\fwt) - \epsilon, \frac{1}{1-r}\}$.
        \item\label{case:bad_progress}  Suppose that~\eqref{eq.strong_r_gap} does not hold. Let $\Rfwt\in\{\fwt, \fwt+1, \ldots, t\}$ be the smallest index such that
              \begin{align*}
                  \gap_i & \geq \left(\frac{M  g(i) \eta_i}{2\epsilon}\right)^{\frac{1}{1-r}}
              \end{align*}
              for all $i\in\{\Rfwt, \Rfwt+1,\ldots,t\}$. Then, Inequality~\eqref{eq.gap}, the strong $(M, r)$-growth property, and $\gap_i\geq \primaldual_i$ for all $i\in\N$ imply that
              \begin{align*}
                  \primaldual_{i+1} & \leq \primaldual_{i}+\eta_{i}\gap_{i}\srb{\frac{M \eta_{i}}{2\gap_{i}^{1-r}}-1} \leq  \primaldual_{i} \left(1 -\srb{1-\frac{\epsilon}{g(i)}} \eta_{i}\right)
              \end{align*}
              for all $i\in\{\Rfwt, \Rfwt +1, \ldots, t\}$. Thus,
              \begin{align}\label{eq.calc_big}
                  \primaldual_{t+1} & \leq \primaldual_{\Rfwt}\prod_{i=\Rfwt}^{t} \left(1 - \srb{1-\frac{\epsilon}{g(i)}}\eta_i\right)\nonumber                                      \\
                    & \leq 
                      \primaldual_\Rfwt \left(\frac{\eta_t}{\eta_{\Rfwt-1}}\right)^{ g(\Rfwt)-\epsilon}. & \text{$\triangleright$ by~\eqref{eq.main_technical.prod_sum_advanced}}
              \end{align}
              We distinguish between two subcases:
              \begin{enumerate}
                  \item\label{case:strong_r_a} $\Rfwt = \fwt$. In this case,~\eqref{eq.calc_big} implies that
                        \begin{align*}
                            \primaldual_{t+1} & \leq   \primaldual_\fwt \left(\frac{\eta_t}{\eta_{\fwt-1}}\right)^{ g(\fwt)-\epsilon}.
                        \end{align*}
                  \item \label{case:strong_r_b} $\Rfwt > \fwt$. In this case, $\gap_{\Rfwt-1}\leq \left(\frac{Mg(\Rfwt-1)\eta_{\Rfwt-1}}{2\epsilon}\right)^{\frac{1}{1-r}}$. We apply~\eqref{eq.good_growth} in Case~\ref{case:good_progress} to get $\primaldual_\Rfwt \leq \left(\frac{M g(\Rfwt-1)\eta_{\Rfwt-1}}{2\epsilon}\right)^{\frac{1}{1-r}}$. Thus,~\eqref{eq.calc_big} gives
                        \begin{align*}
                            \primaldual_{t+1} & \leq \left(\frac{M g(\Rfwt-1)\eta_{\Rfwt-1}}{2\epsilon}\right)^{\frac{1}{1-r}} \left(\frac{\eta_t}{\eta_{\Rfwt-1}}\right)^{ g(\Rfwt)-\epsilon}.
                        \end{align*}
                        Taking the worst possible $\Rfwt > \fwt$ yields
                        \begin{align*}
                            \primaldual_{t+1} & \leq \max_{\Rfwt\in\{\fwt,\fwt+1,\ldots,t\}}\left(\frac{M g(\Rfwt-1)\eta_{\Rfwt-1}}{2\epsilon}\right)^{\frac{1}{1-r}} \left(\frac{\eta_t}{\eta_{\Rfwt-1}}\right)^{ g(\Rfwt)-\epsilon}\nonumber                                                                                     \\
                                              & \leq \left(\frac{M g(t)}{2\epsilon}\right)^{\frac{1}{1-r}}  \max_{\Rfwt\in\{\fwt,\fwt+1,\ldots,t\}} \frac{\eta_{t}^{ g(\fwt) - \epsilon}}{\eta_{\Rfwt-1}^{ g(\fwt) - \epsilon- \frac{1}{1-r}}} & \text{$\triangleright$ by A1} \nonumber \\
                                              & \leq  \left(\frac{M g(t)}{2\epsilon}\right)^{\frac{1}{1-r}}  \eta_{t}^{k}.                                                                                                                                                                           \end{align*}
The last step follows because Assumption A2 implies that $0 < \eta_{i+1} \le \eta_i \le 1$ for all $i\in \N$ and thus
\begin{align*}
\max_{\Rfwt\in\{\fwt,\fwt+1,\ldots,t\}} \frac{\eta_{t}^{ g(\fwt) - \epsilon}}{\eta_{\Rfwt-1}^{ g(\fwt) - \epsilon- \frac{1}{1-r}}} &= 
\left\{\begin{array}{ll} \eta_{t}^{\frac{1}{1-r}}, & \text{ if }  g(\fwt) - \epsilon \geq \frac{1}{1-r} \\
            \frac{\eta_{t}^{ g(\fwt) - \epsilon}}{\eta_{S-1}^{ g(\fwt) - \epsilon- \frac{1}{1-r}}} ,     & \text{ if }  g(\fwt) - \epsilon \leq \frac{1}{1-r} 
        \end{array}\right.   \\                                                                                                                                         &\leq \eta_{t}^{\min\{ g(\fwt) - \epsilon, \frac{1}{1-r}\}}
\\ & = \eta_{t}^k.
\end{align*}

              \end{enumerate}
    \end{enumerate}
    Cases~\ref{case:good_progress},~\ref{case:strong_r_a}, and~\ref{case:strong_r_b} imply~\eqref{eq.rate_strong}.

\bigskip

Finally, for $g(t) = 2+\log(t+1), \; \epsilon = 1,\; S = \lceil\exp(\frac{1}{1-r})\rceil,$ and $t\in \N_{\ge S}$ we have
\[
S\ge \exp\left(\frac{1}{1-r}\right)-1 \Rightarrow g(S) = 2+\log(S+1) \ge \frac{1}{1-r} + 2 = \frac{1}{1-r} + 1 + \epsilon.
\]
Hence $ g(\fwt) -\epsilon \ge \frac{1}{1-r}$
and thus~\eqref{eq.adapt.strong} follows from~\eqref{eq.rate_strong} and the choice of $g(t) = 2+\log(t+1)$.

\end{proof}

\section{Weak growth setting}\label{sec.weak_growth}

In the weak growth setting, attainable acceleration is restricted compared to the strong $(M,r)$-growth setting in Theorem~\ref{thm:strong_M_r_growth}.

\begin{theorem}[Weak $(M, r)$-growth]\label{thm:rate-weak}
    Let $\cC\subseteq \R^n$ be a compact convex set, let $f\colon\cC\to\R$ be convex and differentiable in an open set containing $\cC$, let $g\colon\R_{\geq 0}\to\R_{\geq 0}$ satisfy Assumptions~A1~and~A2, and suppose that $(\cC, f)$  satisfies the strong $(M,0)$- and weak $(M, r)$-growth properties for some $M >0$ and $r\in [0, 1[$. Let $\fwt\in\N_{\geq 1}$, $\epsilon \in ]0,  g(\fwt)[$, and $k := \min\{ g(\fwt) -\epsilon , \frac{1}{1-r}, 2\}$.
    Then, for the iterates of Algorithm~\ref{alg:fw} with step-size $\eta_t = \frac{g(t)}{t+g(t)}$ and all $t\in\N_{\geq \fwt}$, it holds that
    \begin{align}\label{eq.rate_weak}
        \subopt_{t} & \leq
         \max\left\{ \subopt_{\fwt} \left(\frac{\eta_{t-1}}{\eta_{\fwt-1}}\right)^{ g(\fwt) - \epsilon}, \srb{\srb{\frac{M g(t-1)}{2\epsilon}}^{\frac{1}{1-r}} +\frac{M}{2}} \eta_{t-1}^{k}\right\} \notag \\ & = \cO\left(g(t-1)\eta_{t-1}^k\right).
    \end{align}
    
In particular, for $g(t) = 2+\log(t), \; \epsilon = 1,\; S = \lceil\exp(\frac{1}{1-r})\rceil,$ and $t\in \N_{\ge S}$ it holds that
\[
\subopt_{t} = \cO(\log(t)^{\frac{1}{1-r}} (\eta_{t-1}^{\min\{\frac{1}{1-r},2\}})) = \tilde \cO(t^{-\frac{1}{1-r}}+t^{-2}). 
\]

\end{theorem}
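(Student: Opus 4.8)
The plan is to run the same inductive scheme as in the proof of Theorem~\ref{thm:strong_M_r_growth}, but working with the primal suboptimality $\subopt$ rather than the primal--dual gap $\primaldual$: I would use the Frank--Wolfe step identity~\eqref{eq.fw-step} in place of~\eqref{eq.pridualgap}, and use \emph{both} growth hypotheses: the weak $(M,r)$-growth property to produce the geometric-type contraction in the ``bad progress'' case, and the strong $(M,0)$-growth property only to bound the residual Bregman term by $\tfrac{M}{2}\eta_t^2$. That $\eta_t^2$ term is exactly what forces the exponent to be $k=\min\{g(\fwt)-\epsilon,\tfrac{1}{1-r},2\}$ rather than $\min\{g(\fwt)-\epsilon,\tfrac{1}{1-r}\}$, so the whole argument amounts to carrying this extra ``$2$'' through the bookkeeping of the strong-growth proof.

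I would set up the induction over $t\ge\fwt$, the base case $t=\fwt$ being immediate since the first argument of the maximum in~\eqref{eq.rate_weak} equals $\subopt_{\fwt}$ there. For the inductive step, assume~\eqref{eq.rate_weak} at $t$ and split on whether $\subopt_t\le\theta_t$ or $\subopt_t>\theta_t$, where $\theta_t:=\bigl(\tfrac{Mg(t)\eta_t}{2\epsilon}\bigr)^{1/(1-r)}$; note that in the weak setting the natural quantity to threshold is $\subopt_t$, not $\gap_t$, since weak growth controls $D_f$ by $\tfrac{M\eta_t^2}{2}\cdot\tfrac{\gap_t}{\subopt_t^{1-r}}$. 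In the ``good progress'' case $\subopt_t\le\theta_t$, discarding $-\eta_t\gap_t\le 0$ in~\eqref{eq.fw-step} and bounding $D_f\le\tfrac{M\eta_t^2}{2}$ via strong $(M,0)$-growth gives $\subopt_{t+1}\le\theta_t+\tfrac{M\eta_t^2}{2}=\bigl(\tfrac{Mg(t)}{2\epsilon}\bigr)^{1/(1-r)}\eta_t^{1/(1-r)}+\tfrac{M}{2}\eta_t^2\le\bigl(\bigl(\tfrac{Mg(t)}{2\epsilon}\bigr)^{1/(1-r)}+\tfrac{M}{2}\bigr)\eta_t^{k}$, using $\eta_t\le 1$ and $k\le\min\{\tfrac{1}{1-r},2\}$, which is the second argument of the maximum.

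In the ``bad progress'' case $\subopt_t>\theta_t$, let $\Rfwt\in\{\fwt,\dots,t\}$ be the smallest index with $\subopt_i>\theta_i$ for all $i\in\{\Rfwt,\dots,t\}$. For each such $i$, the condition $\subopt_i>\theta_i$ (together with $\epsilon<g(\fwt)\le g(i)$) forces both $1-\tfrac{M\eta_i}{2\subopt_i^{1-r}}>0$ and $\tfrac{M\eta_i}{2\subopt_i^{1-r}}<\tfrac{\epsilon}{g(i)}$, so combining~\eqref{eq.fw-step}, weak $(M,r)$-growth, and $\gap_i\ge\subopt_i$ yields $\subopt_{i+1}\le\subopt_i\bigl(1-(1-\tfrac{\epsilon}{g(i)})\eta_i\bigr)$; telescoping and invoking Lemma~\ref{eq.main_technical.prod_sum_advanced} gives $\subopt_{t+1}\le\subopt_{\Rfwt}\bigl(\tfrac{\eta_t}{\eta_{\Rfwt-1}}\bigr)^{g(\Rfwt)-\epsilon}$. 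If $\Rfwt=\fwt$ this is the first argument of the maximum. If $\Rfwt>\fwt$, minimality gives $\subopt_{\Rfwt-1}\le\theta_{\Rfwt-1}$, so the ``good progress'' bound applied at index $\Rfwt-1$ gives $\subopt_{\Rfwt}\le\bigl(\bigl(\tfrac{Mg(\Rfwt-1)}{2\epsilon}\bigr)^{1/(1-r)}+\tfrac{M}{2}\bigr)\eta_{\Rfwt-1}^{k}$; substituting and using $g(\Rfwt-1)\le g(t)$ (A1), $g(\Rfwt)-\epsilon-k\ge g(\fwt)-\epsilon-k\ge 0$, and $\eta_{\Rfwt-1}\ge\eta_t$ (A2) collapses the $\Rfwt$-dependence to $\subopt_{t+1}\le\bigl(\bigl(\tfrac{Mg(t)}{2\epsilon}\bigr)^{1/(1-r)}+\tfrac{M}{2}\bigr)\eta_t^{k}$. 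Taking the cases together gives~\eqref{eq.rate_weak} at $t+1$; the displayed asymptotic expression follows by bounding the first argument of the maximum by $\cO(\eta_{t-1}^{k})$ (using $g(\fwt)-\epsilon\ge k$) and reading off the leading term of the second. Finally, substituting $g(t)=2+\log(t)$, $\epsilon=1$, $\fwt=\lceil\exp(\tfrac{1}{1-r})\rceil$ gives $g(\fwt)-\epsilon=1+\log(\fwt)\ge\tfrac{1}{1-r}$, hence $k=\min\{\tfrac{1}{1-r},2\}$, and with $g(t-1)=\Theta(\log t)$ and $\eta_{t-1}=\Theta(\tfrac{\log t}{t})$ this reads off as $\subopt_t=\tilde\cO(t^{-\frac{1}{1-r}}+t^{-2})$.

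The step I expect to be the main obstacle is the bookkeeping in the subcase $\Rfwt>\fwt$: one must verify that $g(\Rfwt)-\epsilon-k\ge 0$ (so that $\bigl(\tfrac{\eta_t}{\eta_{\Rfwt-1}}\bigr)^{g(\Rfwt)-\epsilon-k}\le 1$) and that $g(\Rfwt-1)\le g(t)$, both resting on Assumptions~A1--A2 together with the fact that the new third term $2$ in $k=\min\{g(\fwt)-\epsilon,\tfrac{1}{1-r},2\}$ keeps $k\le g(\fwt)-\epsilon$ while still being large enough to absorb the $\eta^2$ produced by strong $(M,0)$-growth; threading this third term cleanly through both cases is the only genuinely new ingredient relative to Theorem~\ref{thm:strong_M_r_growth}. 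A minor additional point of care is the degenerate situation $\gap_i=0$ (equivalently $\subopt_i=0$), where all the threshold manipulations above are vacuous and the bound holds trivially.
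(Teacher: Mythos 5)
Your proof follows the paper's argument essentially verbatim: the same induction, the same threshold $\theta_t=\bigl(\tfrac{Mg(t)\eta_t}{2\epsilon}\bigr)^{1/(1-r)}$ splitting into a "good progress" case (strong $(M,0)$-growth to absorb $D_f\le\tfrac{M}{2}\eta_t^2$) and a "bad progress" case (weak $(M,r)$-growth plus $\gap_i\ge\subopt_i$ to get the contraction, then Lemma~\ref{eq.main_technical.prod_sum_advanced}), with the same subcase split on whether the maximal bad-progress index $\Rfwt$ equals $\fwt$. The only differences are cosmetic — you relax the intermediate good-progress exponent from $\min\{\tfrac{1}{1-r},2\}$ to $k$ and use a strict-inequality threshold, both of which are harmless — so this is the paper's proof.
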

\begin{proof}
    The proof is by induction and an extension of the proof of \cite[Theorem~4.1]{wirth2023accelerated}. First,~\eqref{eq.rate_weak} readily holds for $t  = \fwt$. For the main inductive step, suppose that~\eqref{eq.rate_weak} holds for $t\in\N_{\geq \fwt}$.
    We distinguish between two main cases:
    \begin{enumerate}
        \item \label{case:good_progress.weak} Suppose that
              \begin{align}\label{eq.smaller.weak}
                  \subopt_{t} \leq \srb{\frac{M g(t) \eta_{t}}{2\epsilon}}^{\frac{1}{1-r}}.
              \end{align}
              Then, Equality~\eqref{eq.fw-step} and the strong $(M, 0)$-growth property imply that
              \begin{align}
                  \label{eq.good_growth.weak}
                  \subopt_{t+1} & \leq \subopt_{t} + \frac{M\eta_{t}^2}{2}\nonumber                                                                          \\
                                & \leq \srb{\frac{M g(t) \eta_t}{2\epsilon}}^{\frac{1}{1-r}}+\frac{M \eta_{t}^2}{2}   \nonumber                              \\
                                & \leq \srb{\srb{\frac{M g(t)}{2\epsilon}}^{\frac{1}{1-r}} +\frac{M}{2}} \eta_{t}^{\min\{\frac{1}{1-r},2\}}                  \\
                                & \leq   \srb{\srb{\frac{M g(t)}{2\epsilon}}^{\frac{1}{1-r}} +\frac{M}{2}} \eta_{t}^{k}.\nonumber
              \end{align}
        \item
              \label{case:bad_progress.weak} Suppose that~\eqref{eq.smaller.weak} does not hold. Let $\Rfwt\in \{\fwt, \fwt+1 \ldots, t\}$ be the smallest index such that
              \begin{align*}
                  \subopt_{i} \geq \srb{\frac{M g(i)\eta_{i}}{2\epsilon}}^{\frac{1}{1-r}}
              \end{align*}
              for all $i\in\{\Rfwt, \Rfwt + 1, \ldots, t\}$.
              Then, Equality~\eqref{eq.gap}, the weak $(M, r)$-growth property, and $\gap_i \geq \subopt_{i}$ for all $i\in\N$ imply that
              \begin{align}\label{eq.rec.weak}
                  \subopt_{i+1} \leq \subopt_{i}+\eta_{i}\gap_{i}\srb{\frac{M\eta_{i}}{2\subopt_{i}^{1-r}}-1}
                  \leq  \subopt_{i} \left(1 - \srb{1-\frac{\epsilon}{g(i)}}\eta_{i}\right)
              \end{align}
              for all $i\in \{\Rfwt, \Rfwt + 1, \ldots, t\}$.
              By repeatedly applying~\eqref{eq.rec.weak}, by~\eqref{eq.main_technical.prod_sum_advanced}, and by A2,
              \begin{align}\label{eq.calc_big.weak}
                  \subopt_{t+1} \leq \subopt_{\Rfwt}\prod_{i=\Rfwt}^{t} \left(1 - \srb{1-\frac{\epsilon}{g(i)}}\eta_i\right) & \leq \subopt_\Rfwt \left(\frac{\eta_{t}}{\eta_{\Rfwt-1}}\right)^{ g(\Rfwt) - \epsilon}.
              \end{align}
              We distinguish between two subcases:
              \begin{enumerate}
                  \item\label{case:weak_r_a} $\Rfwt = \fwt$. In this case,~\eqref{eq.calc_big.weak} implies that
                        \begin{align}\label{eq.fwt_does_not_exist.weak}
                            \subopt_{t+1} & \leq \subopt_{\fwt} \left(\frac{\eta_{t}}{\eta_{\fwt-1}}\right)^{ g(\fwt) - \epsilon}.
                        \end{align}
                  \item \label{case:weak_r_b} $\Rfwt > \fwt$. In this case, $\subopt_{\Rfwt-1}\leq \left(\frac{Mg(\Rfwt-1)\eta_{\Rfwt-1}}{2\epsilon}\right)^{\frac{1}{1-r}}$. We apply~\eqref{eq.good_growth.weak} in Case~\ref{case:good_progress.weak} to get $\subopt_{\Rfwt}\leq \srb{\srb{\frac{M g(\Rfwt-1)}{2\epsilon}}^{\frac{1}{1-r}} +\frac{M}{2}}\eta_{\Rfwt-1}^{\min\{\frac{1}{1-r},2\}}$.
                        Thus,~\eqref{eq.calc_big.weak} gives
                        \begin{align*}
                            \subopt_{t+1} & \leq \srb{\srb{\frac{M g(\Rfwt-1)}{2\epsilon}}^{\frac{1}{1-r}} +\frac{M}{2}} \left(\frac{\eta_{t}}{\eta_{\Rfwt-1}}\right)^{ g(\Rfwt) - \epsilon} \eta_{\Rfwt-1}^{\min\{\frac{1}{1-r},2\}}.
                        \end{align*}
                        Taking the worst possible $\Rfwt > \fwt$ gives
                        \begin{align*}
                            &\subopt_{t+1} \\
                            & \leq \max_{\Rfwt\in\{\fwt, \fwt+1, \ldots, t\}} \srb{\srb{\frac{M g(\Rfwt-1)}{2\epsilon}}^{\frac{1}{1-r}} +\frac{M}{2}} \left(\frac{\eta_{t}}{\eta_{\Rfwt-1}}\right)^{ g(\Rfwt) - \epsilon} \eta_{\Rfwt-1}^{\min\{\frac{1}{1-r},2\}}                                                                                \\
                                          & \leq  \max_{\Rfwt\in\{\fwt, \fwt+1, \ldots, t\}} \srb{\srb{\frac{M g(t)}{2\epsilon}}^{\frac{1}{1-r}} +\frac{M}{2}} \left(\frac{\eta_{t}}{\eta_{\Rfwt-1}}\right)^{ g(\Rfwt) - \epsilon} \eta_{\Rfwt-1}^{\min\{\frac{1}{1-r},2\}}       & \text{$\triangleright$ by~A1}                                \\
                                          & \leq  \srb{\srb{\frac{M g(t)}{2\epsilon}}^{\frac{1}{1-r}} +\frac{M}{2}} \eta_{t}^{k}.                                                                                                                                                               
                        \end{align*}
              \end{enumerate}
    \end{enumerate}
    Cases~\ref{case:good_progress.weak},~\ref{case:weak_r_a}, and~\ref{case:weak_r_b} imply~\eqref{eq.rate_weak}.
\end{proof}
For the weak $(M, r)$-growth setting, Theorem~\ref{thm:rate-weak} shows that the convergence rates are capped by $\tilde\cO(t^{-2})$, irrespective of the choice of $g$. Thus, the log-adaptive step-size converges at the same rate as fixed-$\ell$ open-loop step-sizes, up to polylogarithmic factors.

\section{Numerical experiments}\label{sec.experiments}

In this section, we present some numerical experiments. The experiments are implemented in \textsc{Python} and performed on an AMD EPYC 7502P 32-Core Processor CPU with 64GB of RAM. Our code is publicly available on
\href{https://github.com/ZIB-IOL/open_loop_fast}{GitHub}. We adopt a similar experimental setup to \cite{wirth2023accelerated} for our study.  
We compare the optimality measures $\gap_t$, $\primaldual_t$, and $\subopt_t$ of \fw{} with $\eta_t = \frac{g(t)}{t+g(t)}$ for $g(t) = 2, g(t)=4,$ and $g(t)=2+\log(t+1)$ for several instances of two classes of problems: constrained regression in Section~\ref{sec:constrained regression} and collaborative filtering in Section~\ref{sec.collaborative_filtering}. For additional visualization, we also plot $\cO(t^{-2})$.

\subsection{Regression}\label{sec:constrained regression}

We compare the different step-sizes for the constrained regression problem
\begin{align}\label{eq.regression}
    \min_{\xx\in\R^n, \|\xx\|_p \leq \beta} & \; \frac{1}{2}\|A \xx - \yy\|_2^2,
\end{align}
where $A\in\R^{m \times n}$, $\yy\in\R^m$, $p>1$, and $\beta > 0$. We denote the unconstrained optimizer by $\xx_{\text{unc}}:= \argmin_{\xx\in\R^n} \frac{1}{2}\|A \xx - \yy\|_2^2$. For different $\ell_p$-balls, $p\in\{2, 5\}$, and for $\beta \in\|\xx_{\text{unc}}\|_p\cdot \{1/2,3/2\}$, we compare the convergence rates. These values of $\beta$ correspond to the location of the unconstrained optimizer lying in the relative exterior, and in the relative interior of the feasible region, respectively. As it is discussed in~\cite{wirth2023accelerated}, when    $\beta = 1/2$, problem~\eqref{eq.regression} satisfies the $(M,r)$ strong growth property for $r = 2/\max(2,p)$ some  constant $M>0$ that depends on $A, \xx_{\text{unc}},$ and $p$.
On the other hand, again as discussed in~\cite{wirth2023accelerated}, when
$\beta =3/2$, problem~\eqref{eq.regression} satisfies the $(M,0)$ strong growth property and the $(M,r)$ weak growth property for $r = 2/\max(2,p)$ and some  constant $M>0$ that depends on $A, \xx_{\text{unc}},$ and $p$.

\begin{figure}[!th]
    \captionsetup[subfigure]{justification=centering}
    \begin{tabular}{c c c}    
        \begin{subfigure}{.31\textwidth}
            \centering
            \includegraphics[width=1\textwidth]{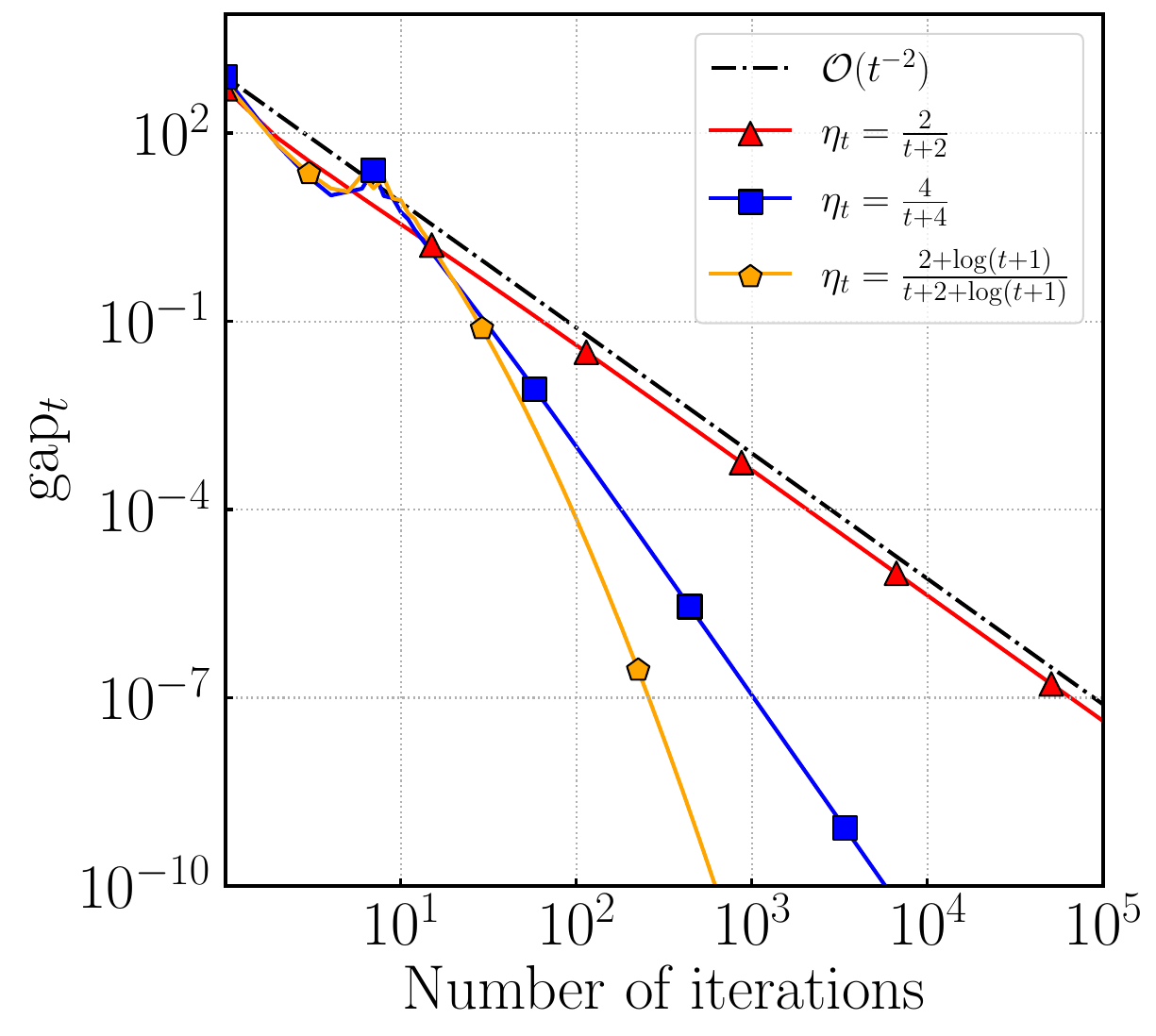}
            \caption{relative exterior, $\gap_t$.}\label{fig:reg_boston_gap_2_exterior}
        \end{subfigure} &
        \begin{subfigure}{.31\textwidth}
            \centering
            \includegraphics[width=1\textwidth]{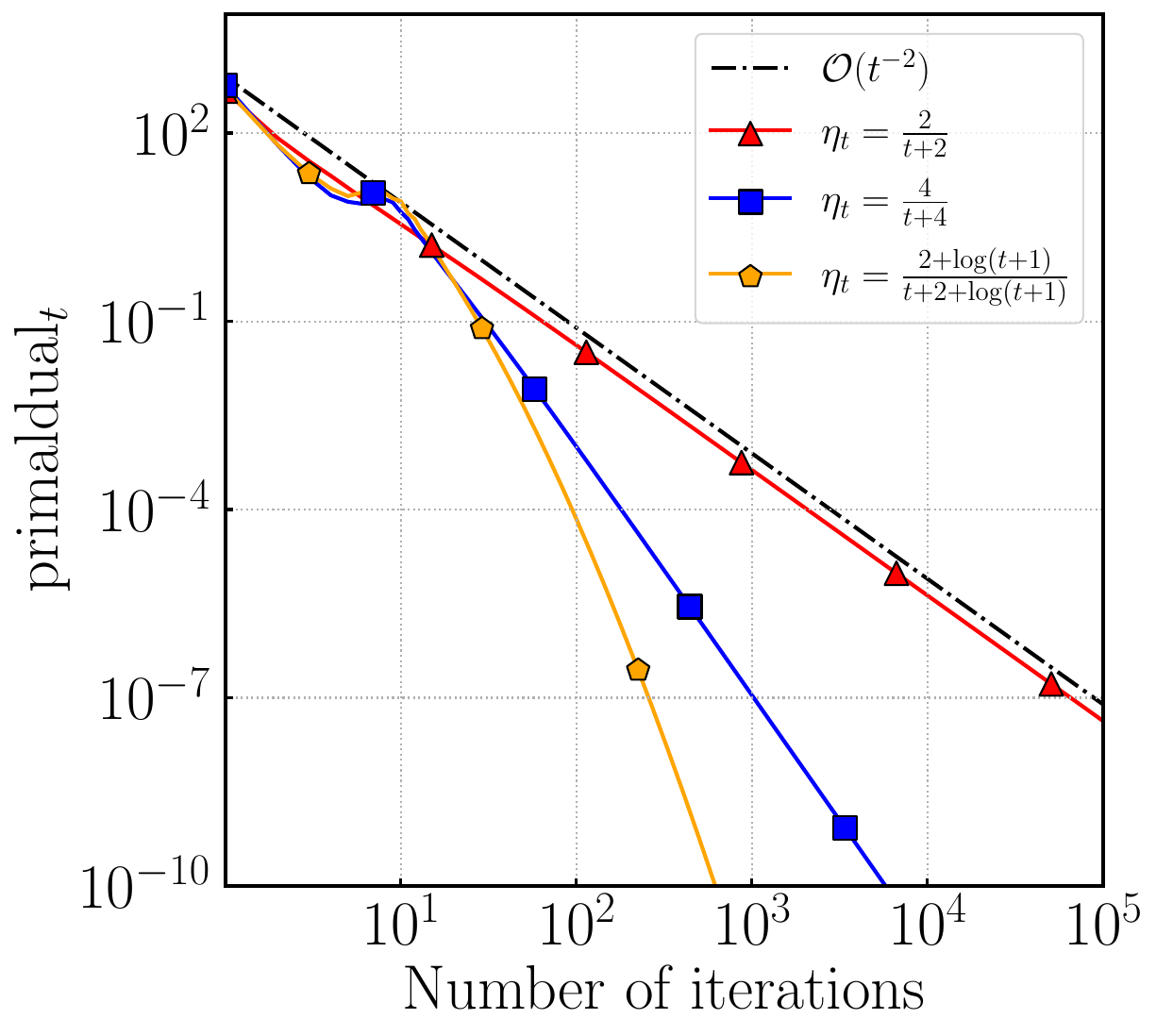}
            \caption{relative exterior, $\primaldual_t$.}\label{fig:reg_boston_primaldual_2_exterior}
        \end{subfigure} &
        \begin{subfigure}{.31\textwidth}
            \centering
            \includegraphics[width=1\textwidth]{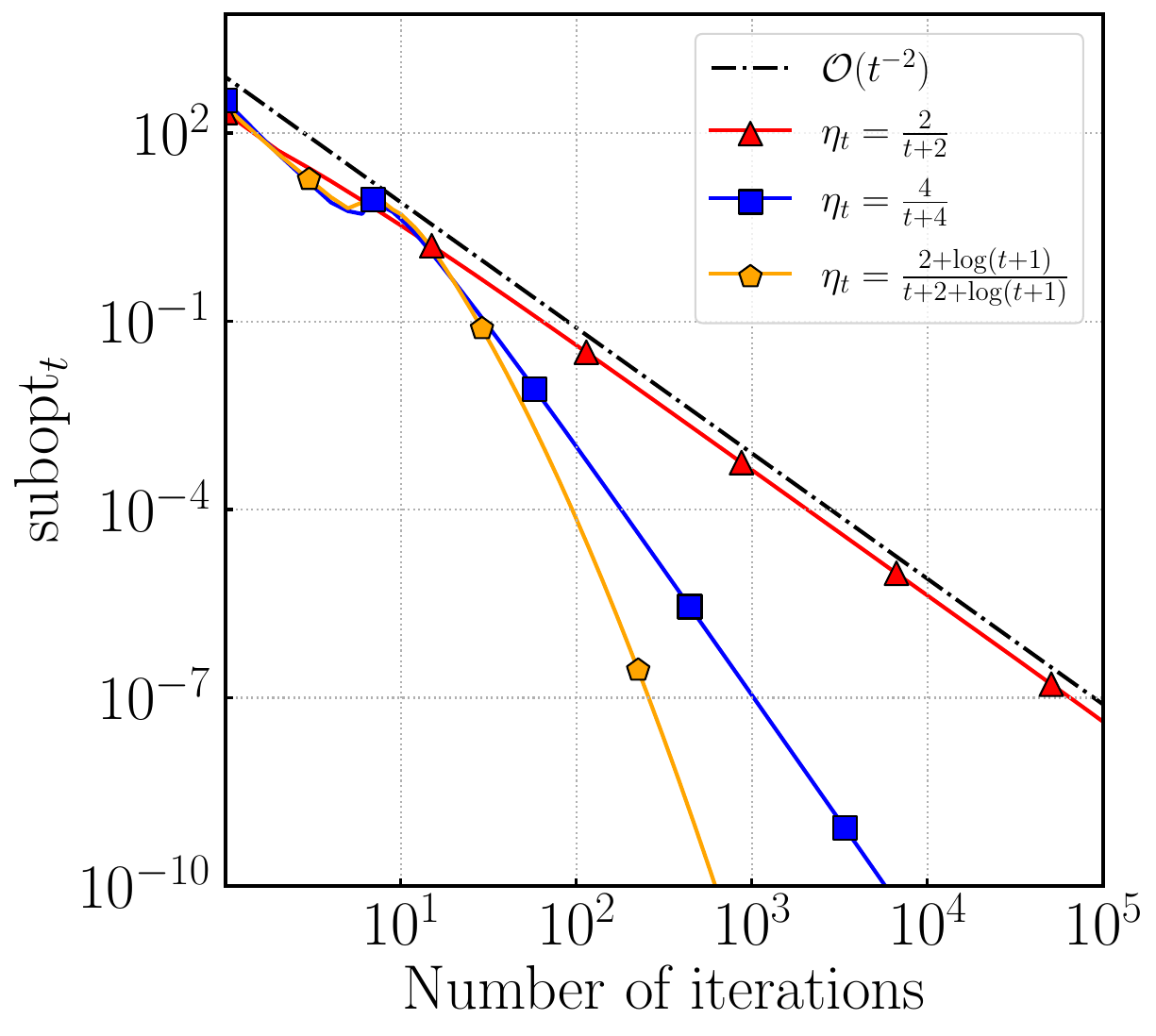}
            \caption{relative exterior, $\subopt_t$.}\label{fig:reg_boston_subopt_2_exterior}
        \end{subfigure}\\    
        \begin{subfigure}{.31\textwidth}
            \centering
            \includegraphics[width=1\textwidth]{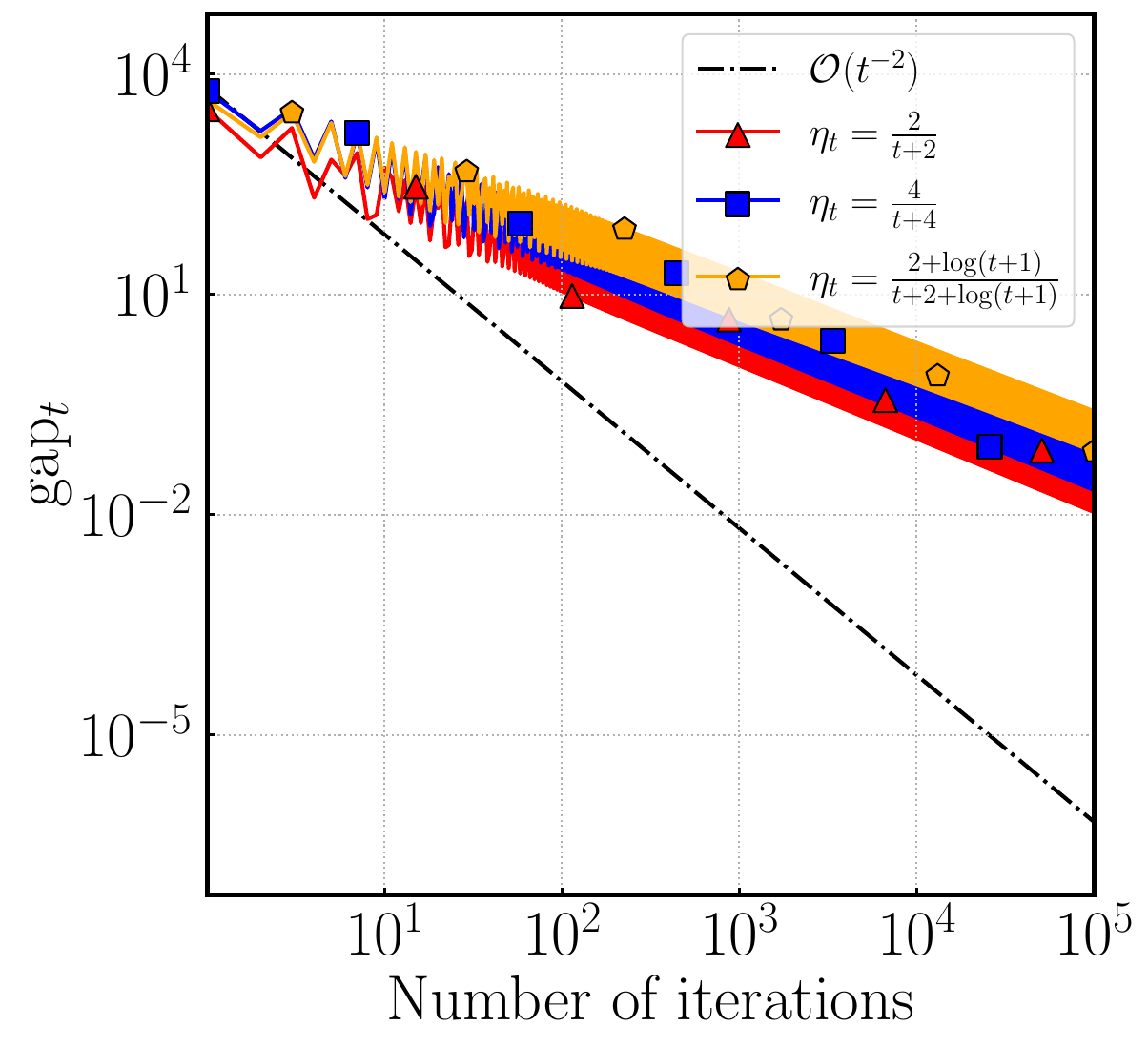}
            \caption{relative interior, $\gap_t$.}\label{fig:reg_boston_gap_2_interior}
        \end{subfigure} &
        \begin{subfigure}{.31\textwidth}
            \centering
            \includegraphics[width=1\textwidth]{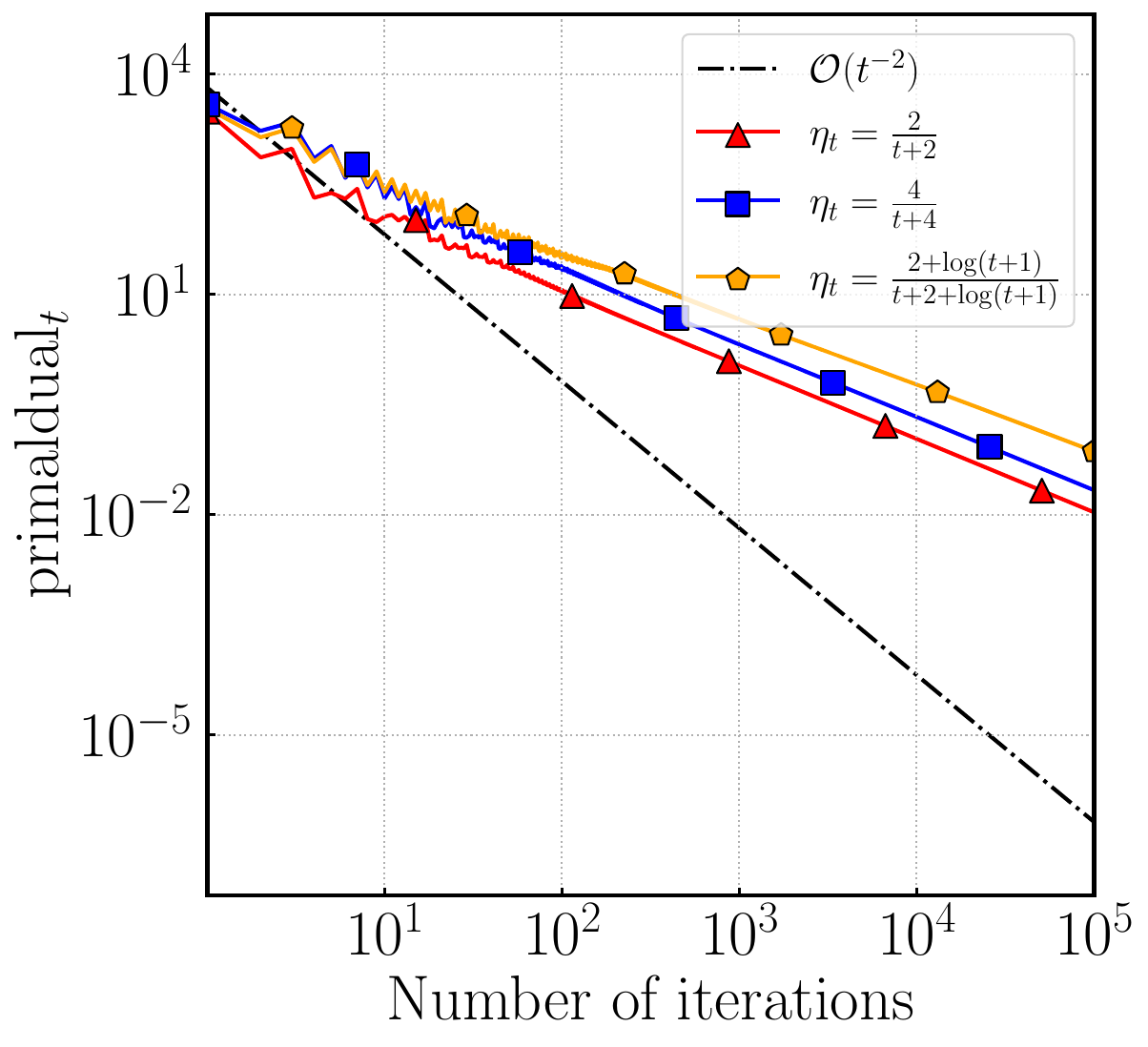}
            \caption{relative interior, $\primaldual_t$.}\label{fig:reg_boston_primaldual_2_interior}
        \end{subfigure} &
        \begin{subfigure}{.31\textwidth}
            \centering
            \includegraphics[width=1\textwidth]{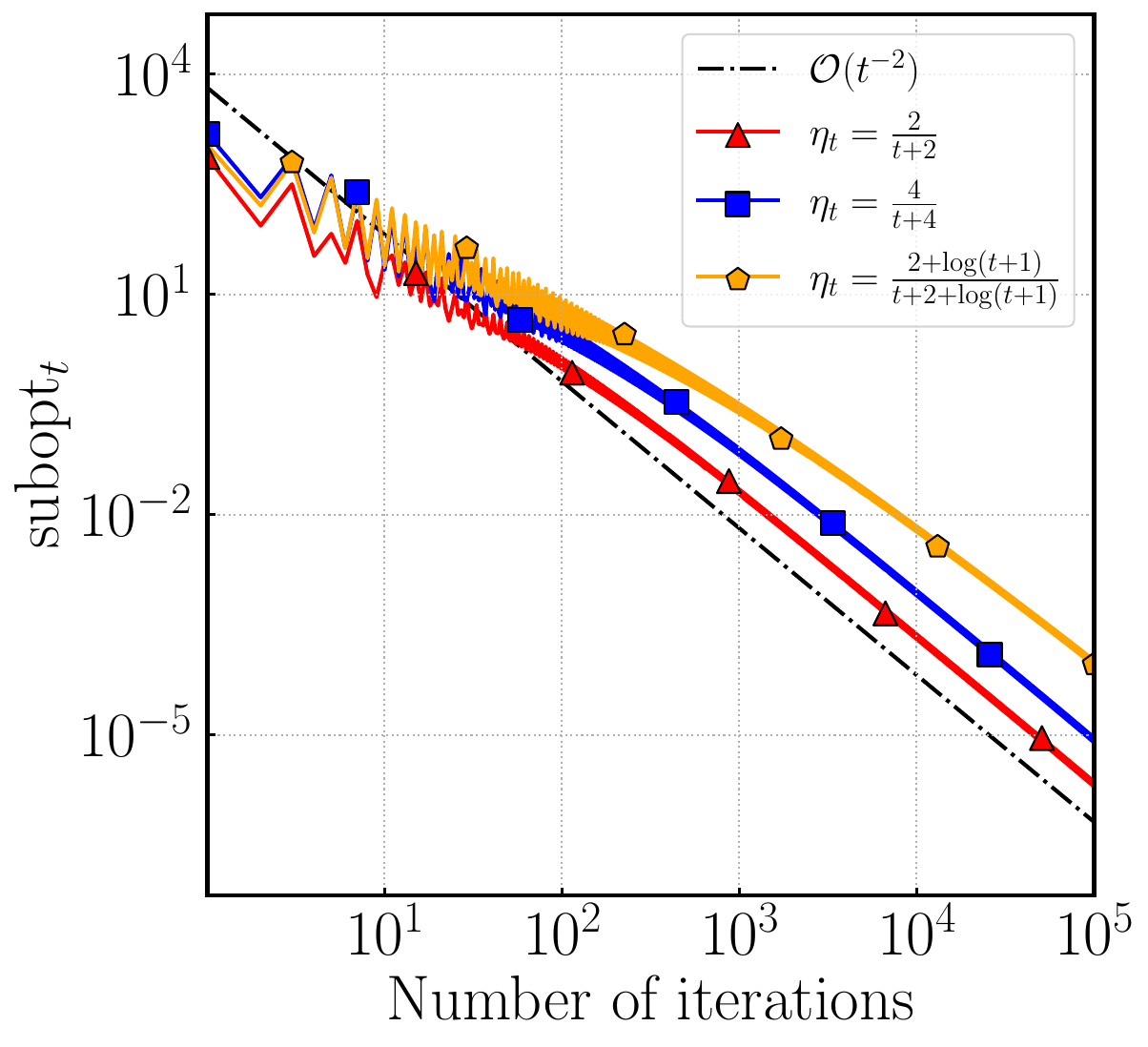}
            \caption{relative interior, $\subopt_t$.}\label{fig:reg_boston_subopt_2_interior}
        \end{subfigure}   
    \end{tabular}
    \caption{\textbf{Constrained regression over the $\ell_2$-ball.}
        Convergence rate comparison of \fw{} with different step-sizes for \eqref{eq.regression} for different locations of the unconstrained optimizer in the relative interior, on the relative boundary, and in the relative exterior of the feasible region and for the three different optimality measures $\gap_t$, $\primaldual_t$, and $\subopt_t$  on the Boston-housing dataset. Axes are in log scale.
    }\label{fig:reg_boston_2}
\end{figure}

\begin{figure}[!t]
    \captionsetup[subfigure]{justification=centering}
    \begin{tabular}{c c c}    
        \begin{subfigure}{.31\textwidth}
            \centering
            \includegraphics[width=1\textwidth]{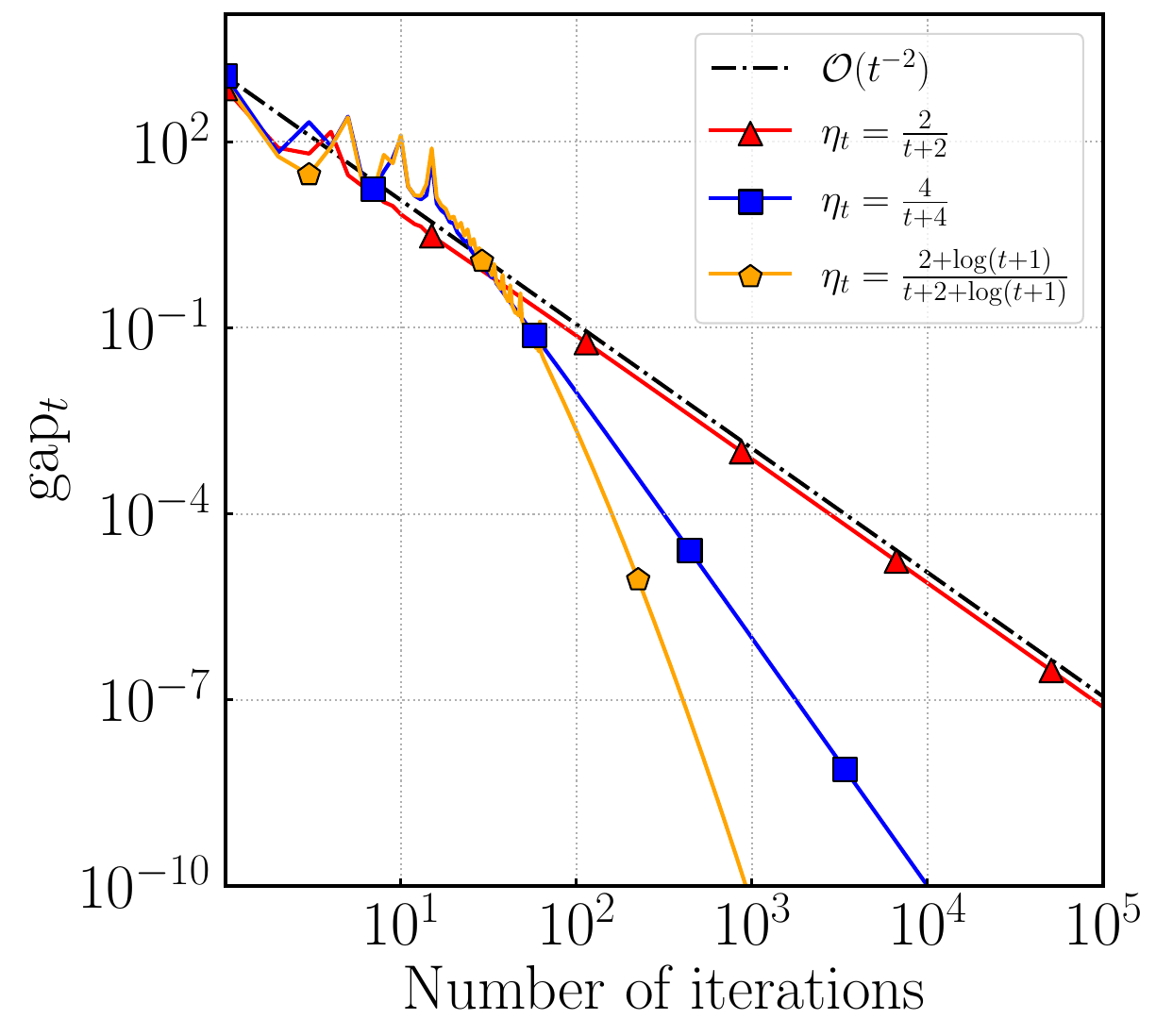}
            \caption{relative exterior, $\gap_t$.}\label{fig:reg_boston_gap_5_exterior}
        \end{subfigure} &
        \begin{subfigure}{.31\textwidth}
            \centering
            \includegraphics[width=1\textwidth]{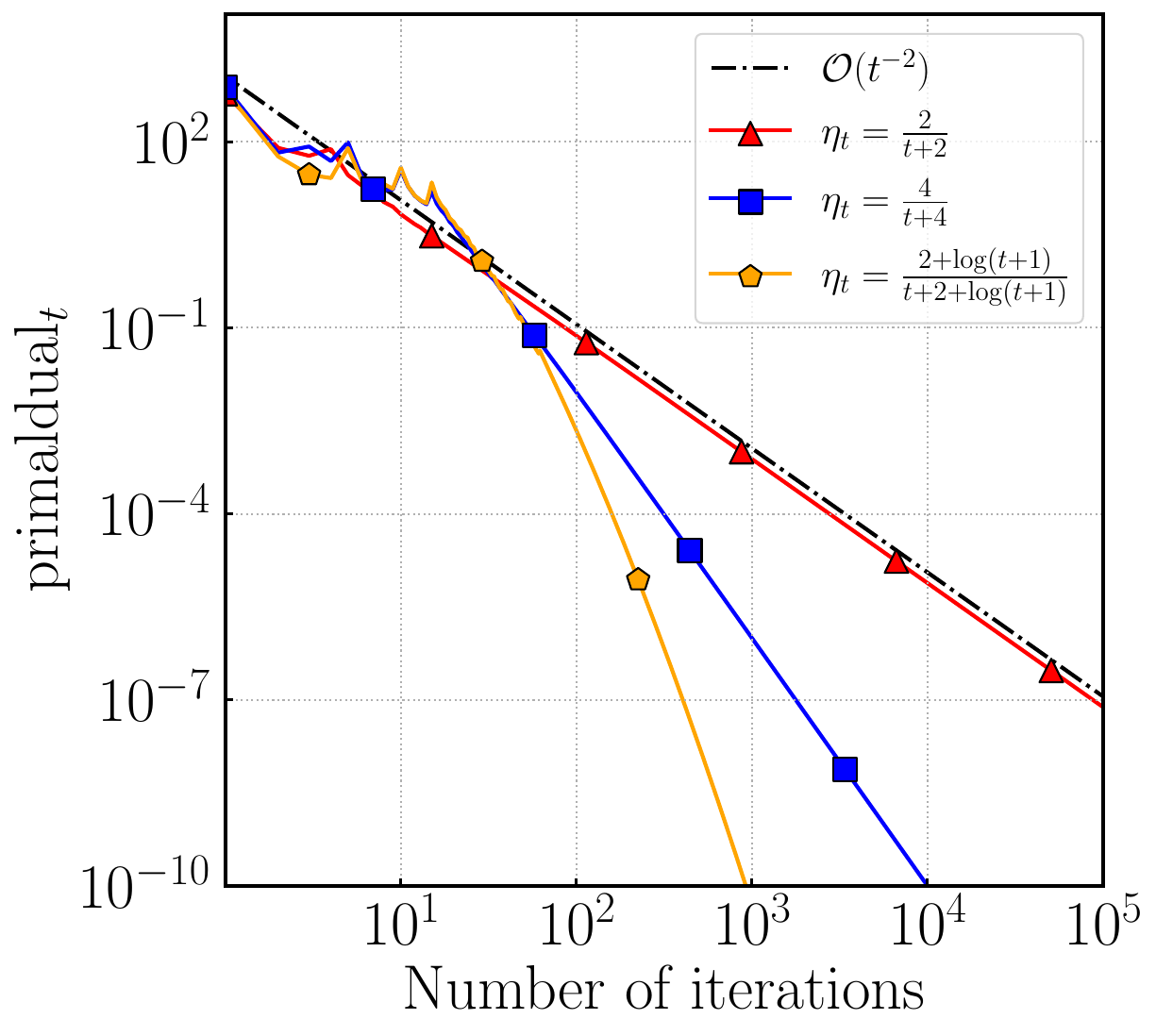}
            \caption{relative exterior, $\primaldual_t$.}\label{fig:reg_boston_primaldual_5_exterior}
        \end{subfigure} &
        \begin{subfigure}{.31\textwidth}
            \centering
            \includegraphics[width=1\textwidth]{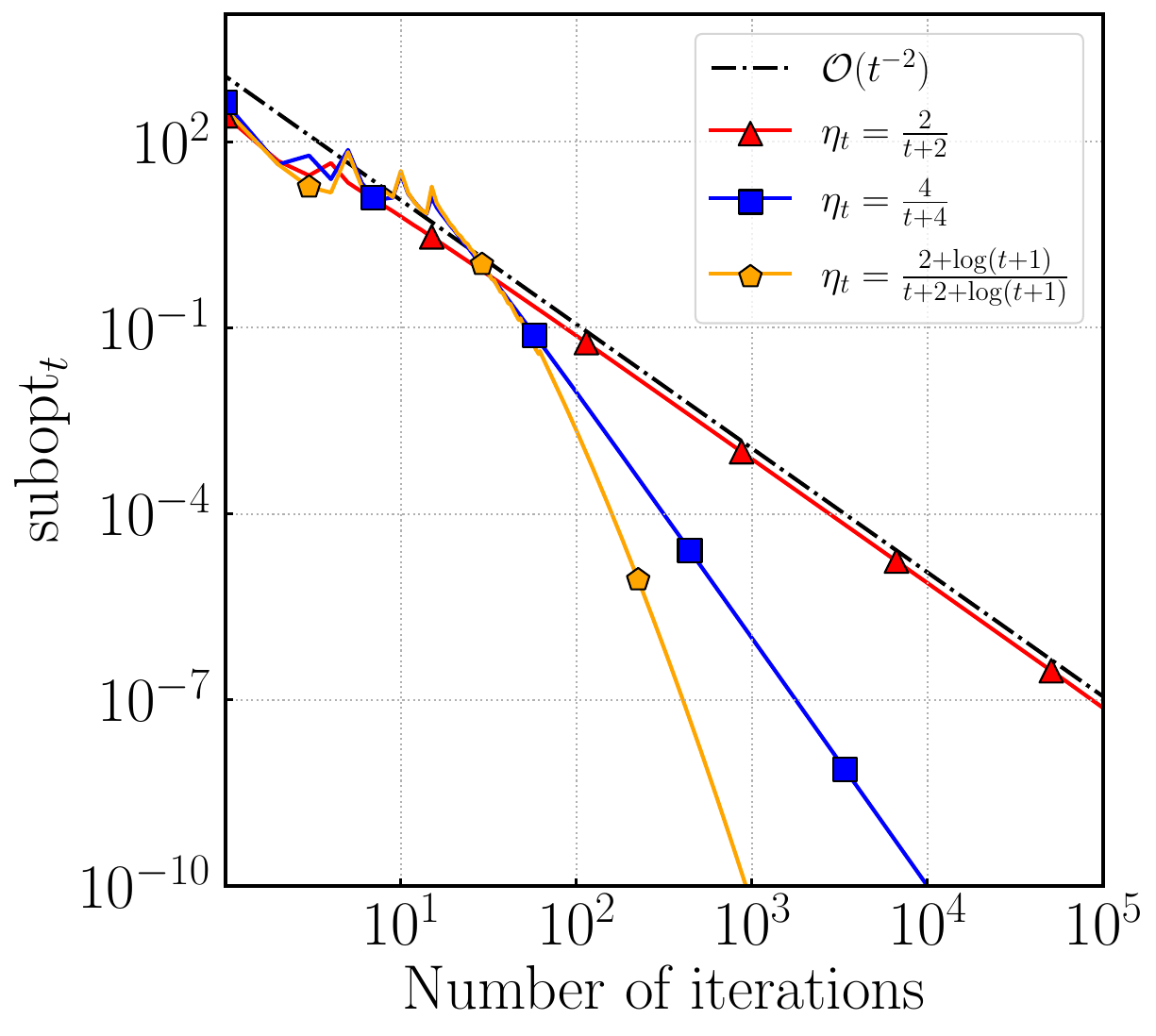}
            \caption{relative exterior, $\subopt_t$.}\label{fig:reg_boston_subopt_5_exterior}
        \end{subfigure}\\    
        \begin{subfigure}{.31\textwidth}
            \centering
            \includegraphics[width=1\textwidth]{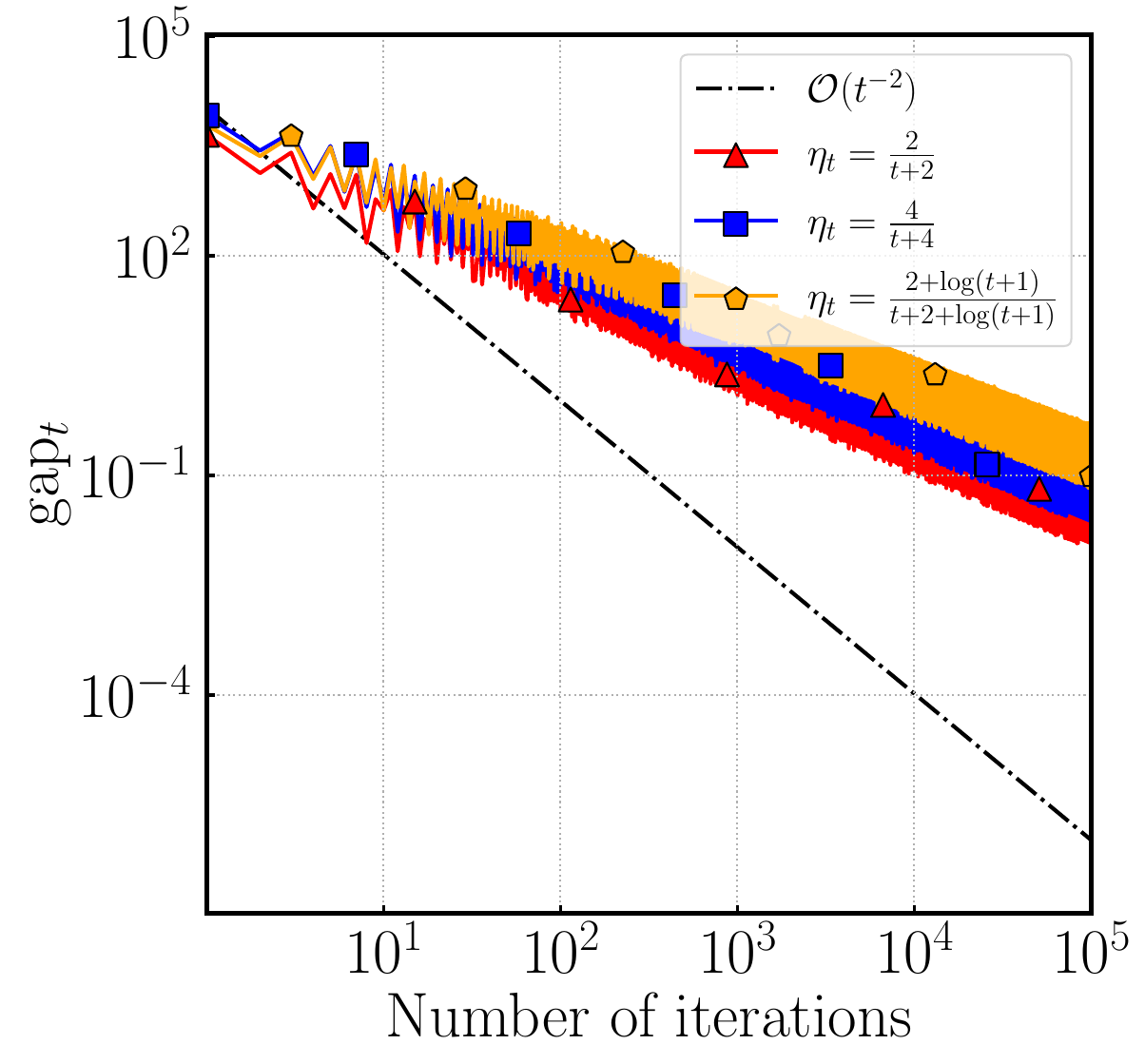}
            \caption{relative interior, $\gap_t$.}\label{fig:reg_boston_gap_5_interior}
        \end{subfigure} &
        \begin{subfigure}{.31\textwidth}
            \centering
            \includegraphics[width=1\textwidth]{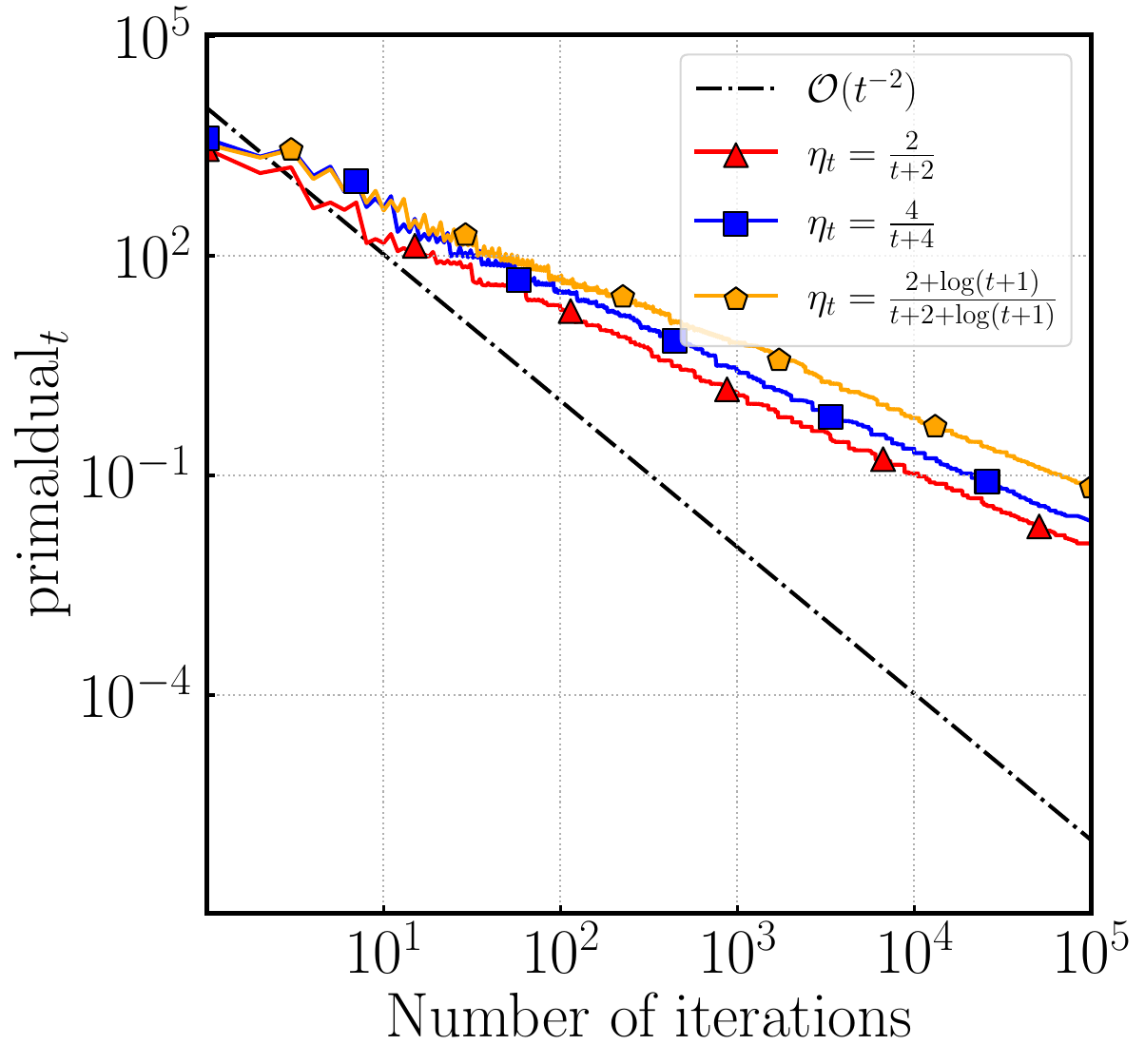}
            \caption{relative interior, $\primaldual_t$.}\label{fig:reg_boston_primaldual_5_interior}
        \end{subfigure} &
        \begin{subfigure}{.31\textwidth}
            \centering
            \includegraphics[width=1\textwidth]{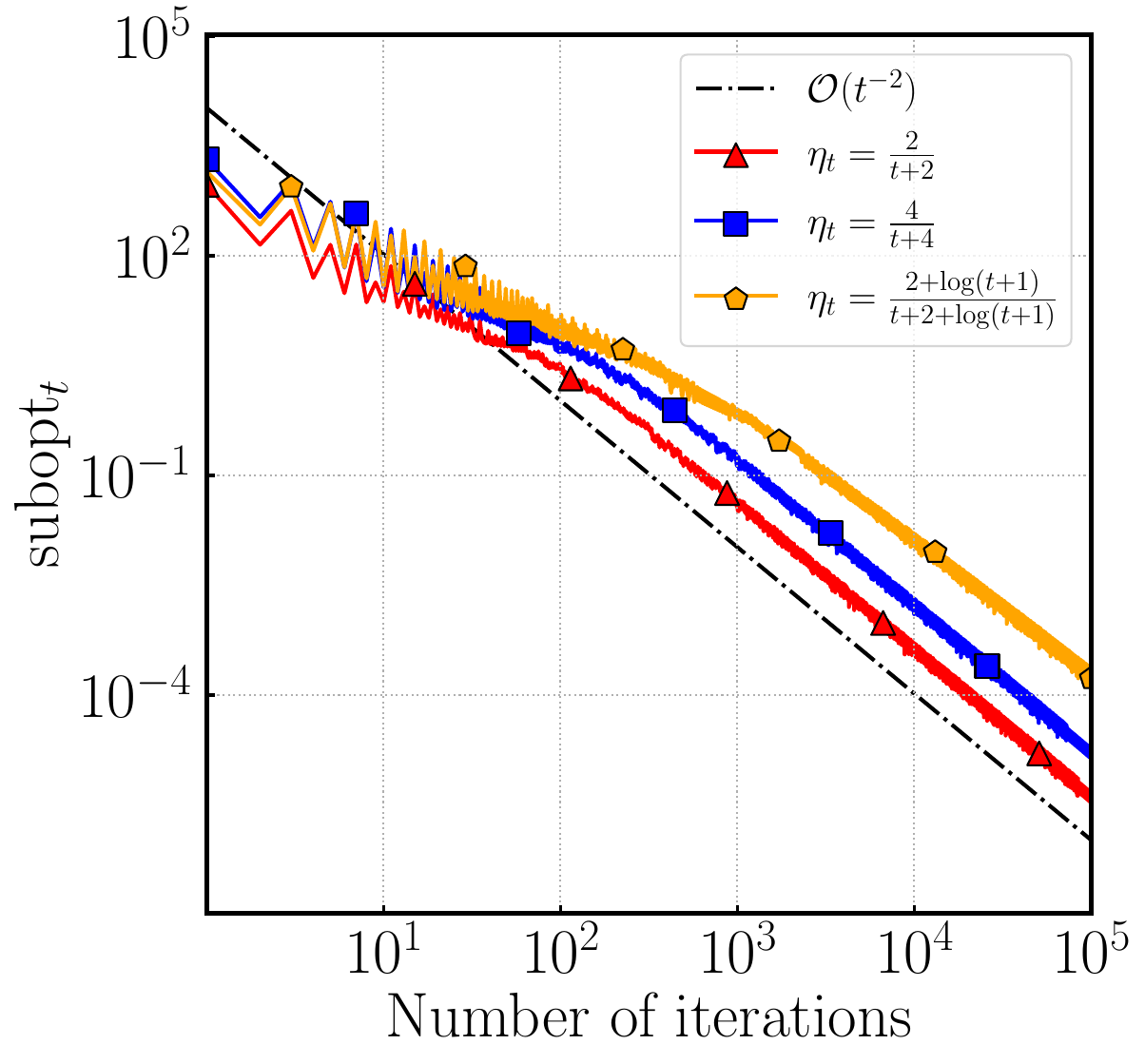}
            \caption{relative interior, $\subopt_t$.}\label{fig:reg_boston_subopt_5_interior}
        \end{subfigure}   
    \end{tabular}
    \caption{\textbf{Constrained regression over the $\ell_5$-ball.}
        Convergence rate comparison of \fw{} with different step-sizes for \eqref{eq.regression} for different locations of the unconstrained optimizer in the relative interior, on the relative boundary, and in the relative exterior of the feasible region and for the three different optimality measures $\gap_t$, $\primaldual_t$, and $\subopt_t$ on the Boston-housing dataset. Axes are in log scale.
    }\label{fig:reg_boston_5}
\end{figure}

To construct instances of~\eqref{eq.regression}, the matrix $A$ and vector $\yy$ are obtained by Z-score normalizing the data of the Boston-housing dataset ($m = 506, n=13$), obtained via the \textsc{sklearn} package, see, e.g., \href{https://scikit-learn.org/0.15/index.html}{https://scikit-learn.org/0.15/index.html}.  Numerical results are presented in Figures~\ref{fig:reg_boston_2} and~\ref{fig:reg_boston_5}.

In all settings and for all optimality measures, the proposed step-size $\eta_t=\frac{2+\log(t+1)}{t+2+\log(t+1)}$ indeed converges at least as fast as fixed-$\ell$ step-sizes of the form $\eta_t=\frac{\ell}{t+\ell}$ for any $\ell\in\N$, up to polylogarithmic factors.
Furthermore, in the strong $(M,r)$-growth settings of Figures~\ref{fig:reg_boston_gap_2_exterior}--\ref{fig:reg_boston_subopt_2_exterior} and Figures~\ref{fig:reg_boston_gap_5_exterior}--\ref{fig:reg_boston_subopt_5_exterior} and for all optimality measures, the log-adaptive step-size admits arbitrarily fast sublinear convergence rates and strictly outperforms the two fixed-$\ell$ step-sizes as predicted by Theorem~\ref{thm:strong_M_r_growth}.

\subsection{Collaborative filtering}\label{sec.collaborative_filtering}

We compare the different step-sizes for the collaborative filtering problem as in \cite{mehta2007robust}
\begin{align}\label{eq.collaborative_filterting}
    \min_{X\in\R^{m\times n}, \|X\|_{\nuc}\leq \beta} & \; \frac{1}{|\cI|} \sum_{(i,j)\in\cI} H(A_{i,j} - X_{i,j}),
\end{align}
where $H$ is a Huber loss \cite{huber1992robust}, that is,
\begin{align*}
    H\colon x\in \R \mapsto  \begin{cases}
        \frac{x^2}{2},           & \text{if} \ |x| \leq 1 \\
        \rho(|x| - \frac{1}{2}), & \text{if} \ |x| > 1,
    \end{cases}
\end{align*}
$A\in\R^{m\times n}$ is a matrix with missing entries, that is, only the entries $A_{i,j}$ with $(i,j)\in\cI$ are known, where $\cI \subseteq \{1,\ldots,m\}\times\{1,\ldots,n\}$ is the set of observed indices,
$\|\cdot\|_{\nuc}$ is the nuclear norm, and $\beta>0$.

For $\beta \in\{1000, 3000\}$, we compare the convergence rates on the movielens dataset ($m=943, n=1682$). The dataset is obtained via \href{https://grouplens.org/datasets/movielens/100k/}{https://grouplens.org/datasets/movielens/100k/}

\begin{figure}[!t]
    \captionsetup[subfigure]{justification=centering}
    \begin{tabular}{c c c}
        \begin{subfigure}{.31\textwidth}
            \centering
            \includegraphics[width=1\textwidth]{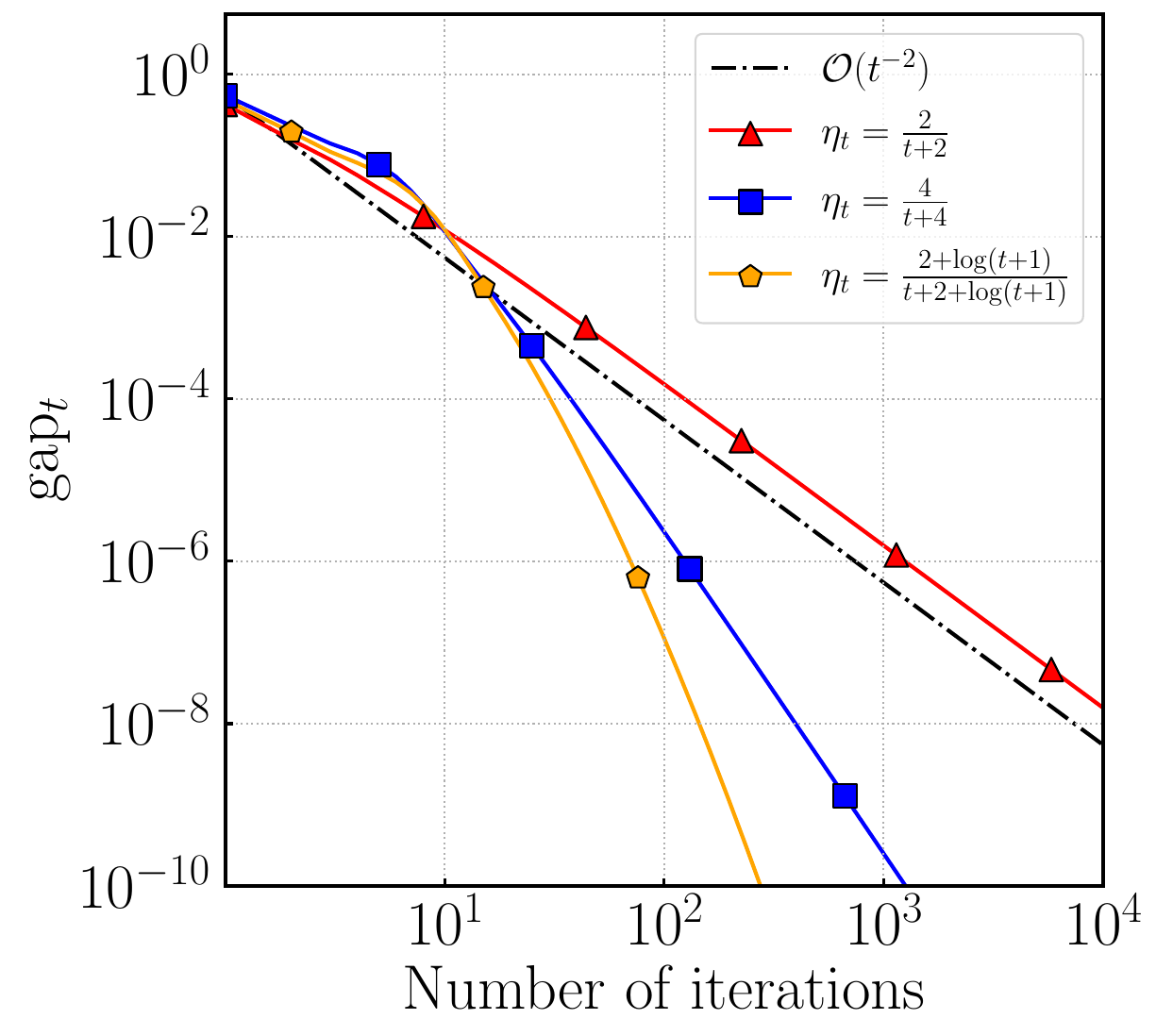}
            \caption{radius $1000$, $\gap_t$.}\label{fig:col_fil_1000_gap}
        \end{subfigure} &
        \begin{subfigure}{.31\textwidth}
            \centering
            \includegraphics[width=1\textwidth]{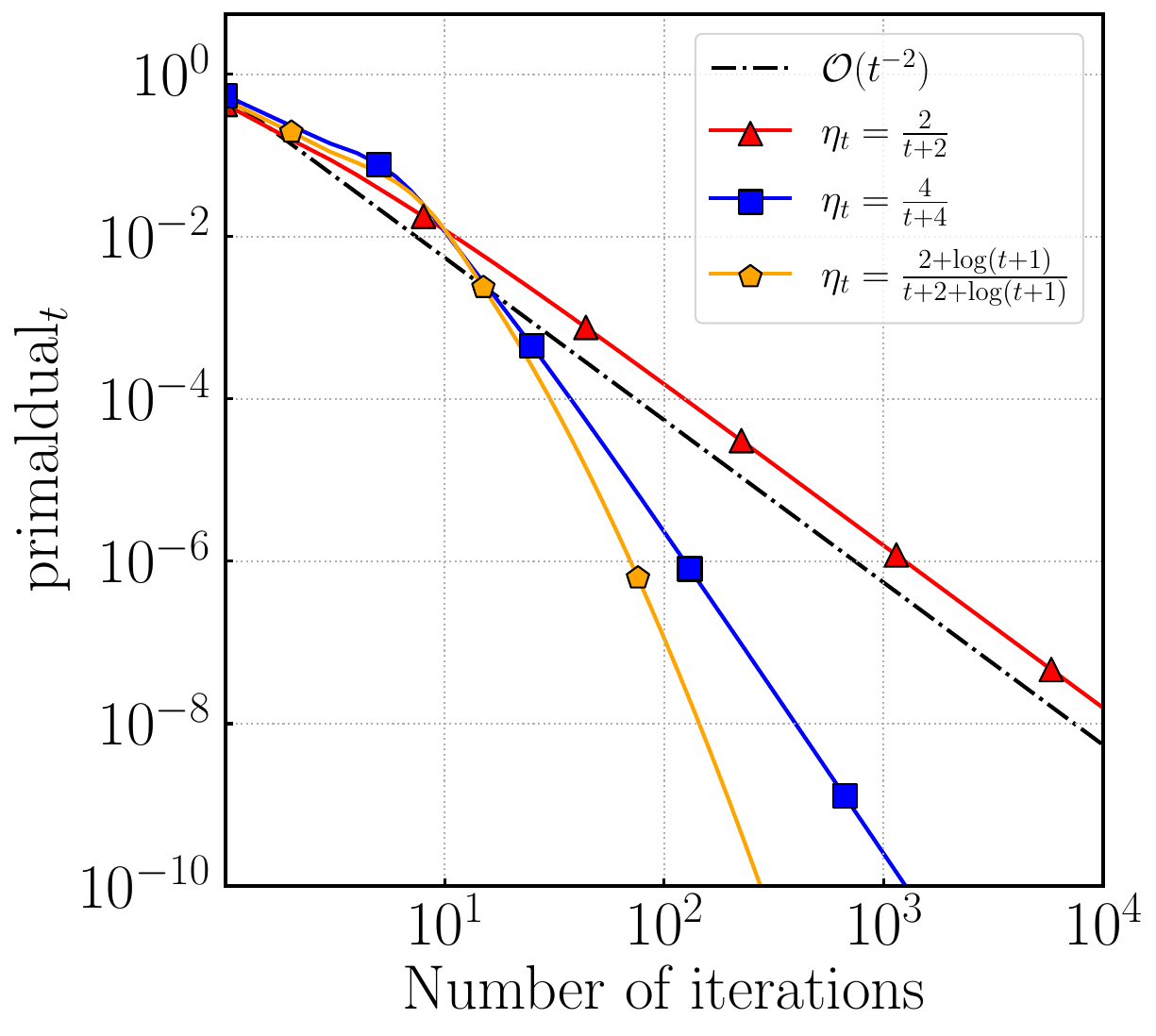}
            \caption{radius $1000$, $\primaldual_t$.}\label{fig:col_fil_1000_primaldual}
        \end{subfigure} &
        \begin{subfigure}{.31\textwidth}
            \centering
            \includegraphics[width=1\textwidth]{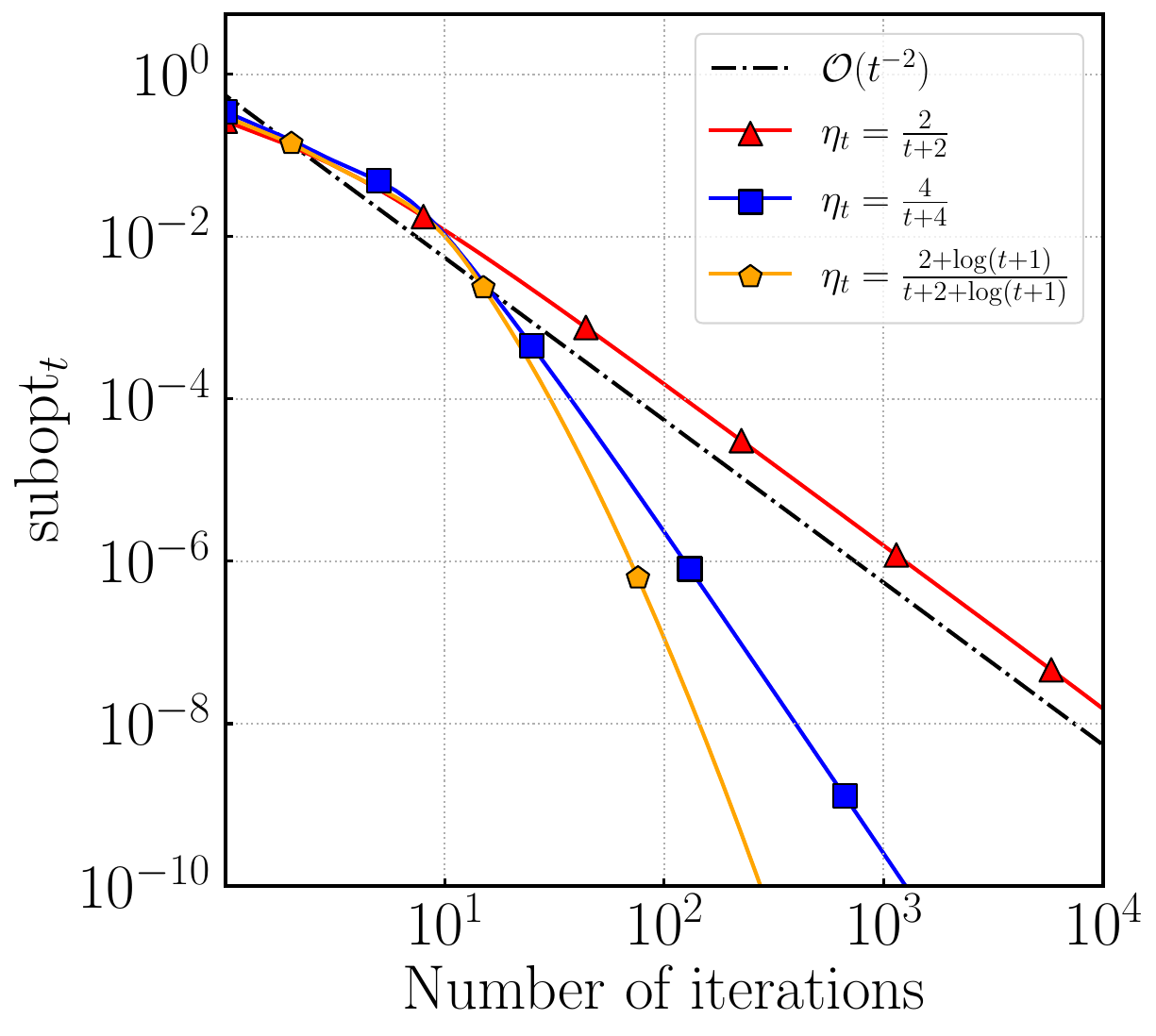}
            \caption{radius $1000$, $\subopt_t$.}\label{fig:col_fil_1000_subopt}
        \end{subfigure}   \\
        \begin{subfigure}{.31\textwidth}
            \centering
            \includegraphics[width=1\textwidth]{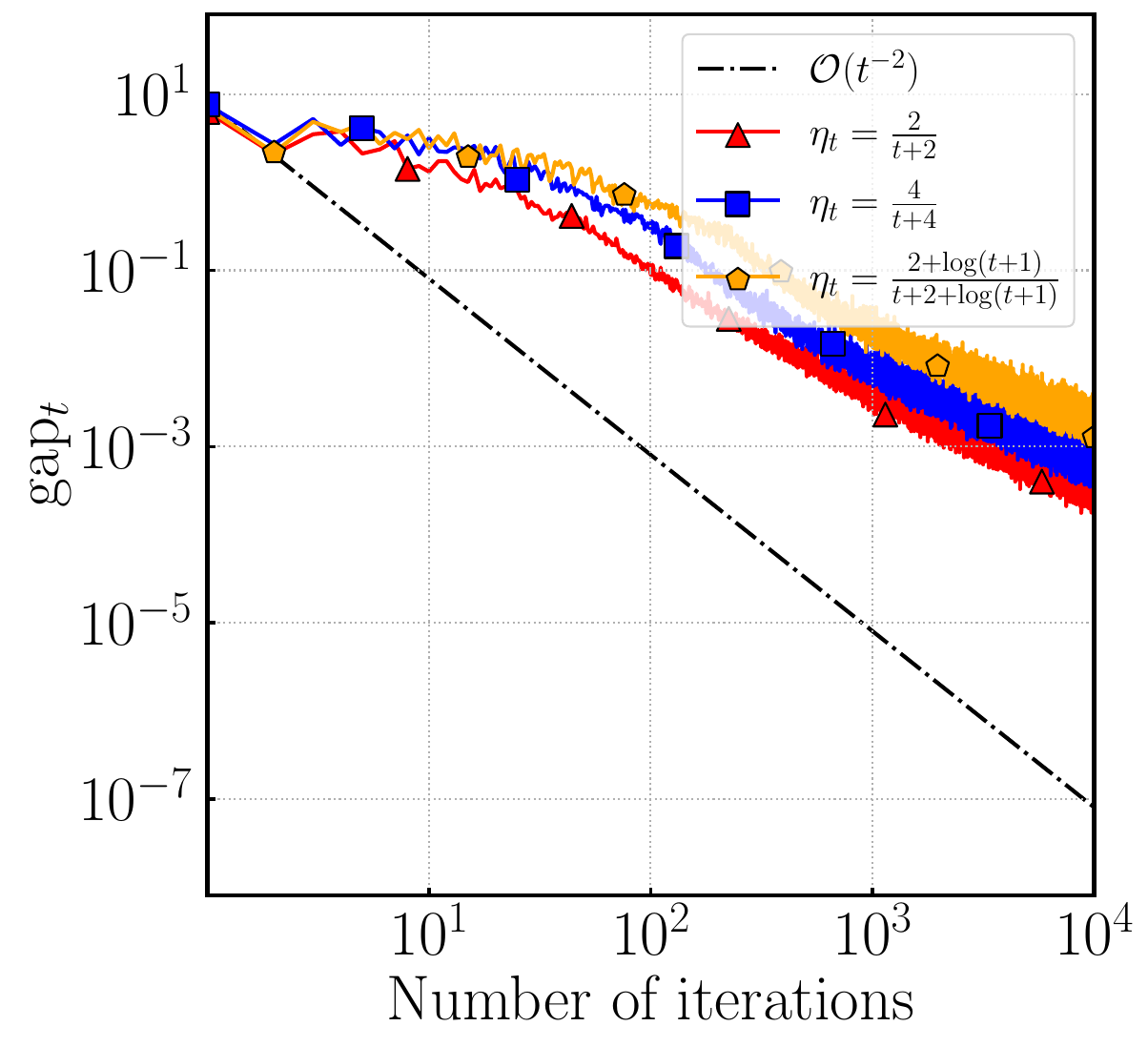}
            \caption{radius $3000$, $\gap_t$.}\label{fig:col_fil_3000_gap}
        \end{subfigure} &
        \begin{subfigure}{.31\textwidth}
            \centering
            \includegraphics[width=1\textwidth]{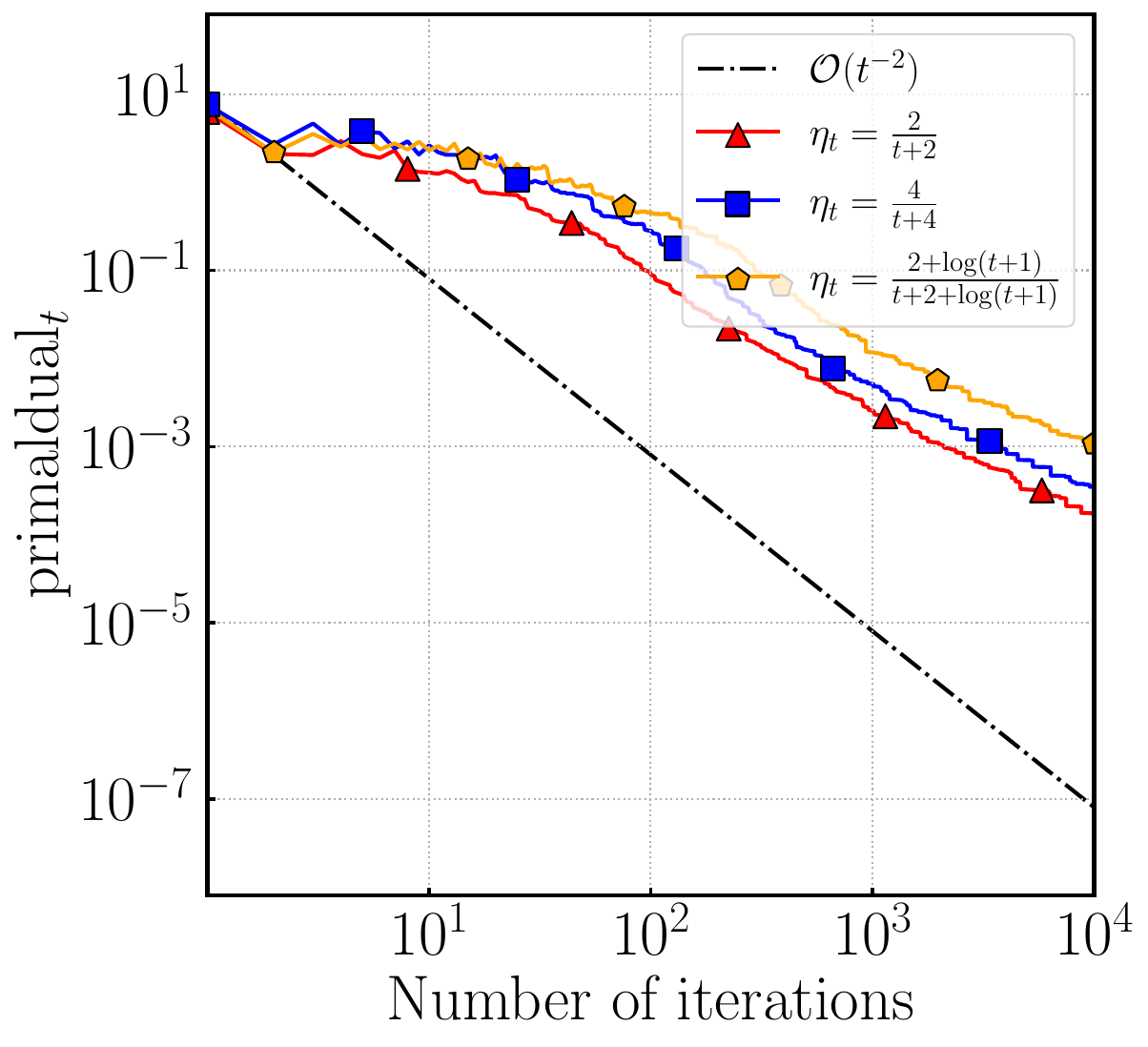}
            \caption{radius $3000$, $\primaldual_t$.}\label{fig:col_fil_3000_primaldual}
        \end{subfigure} &
        \begin{subfigure}{.31\textwidth}
            \centering
            \includegraphics[width=1\textwidth]{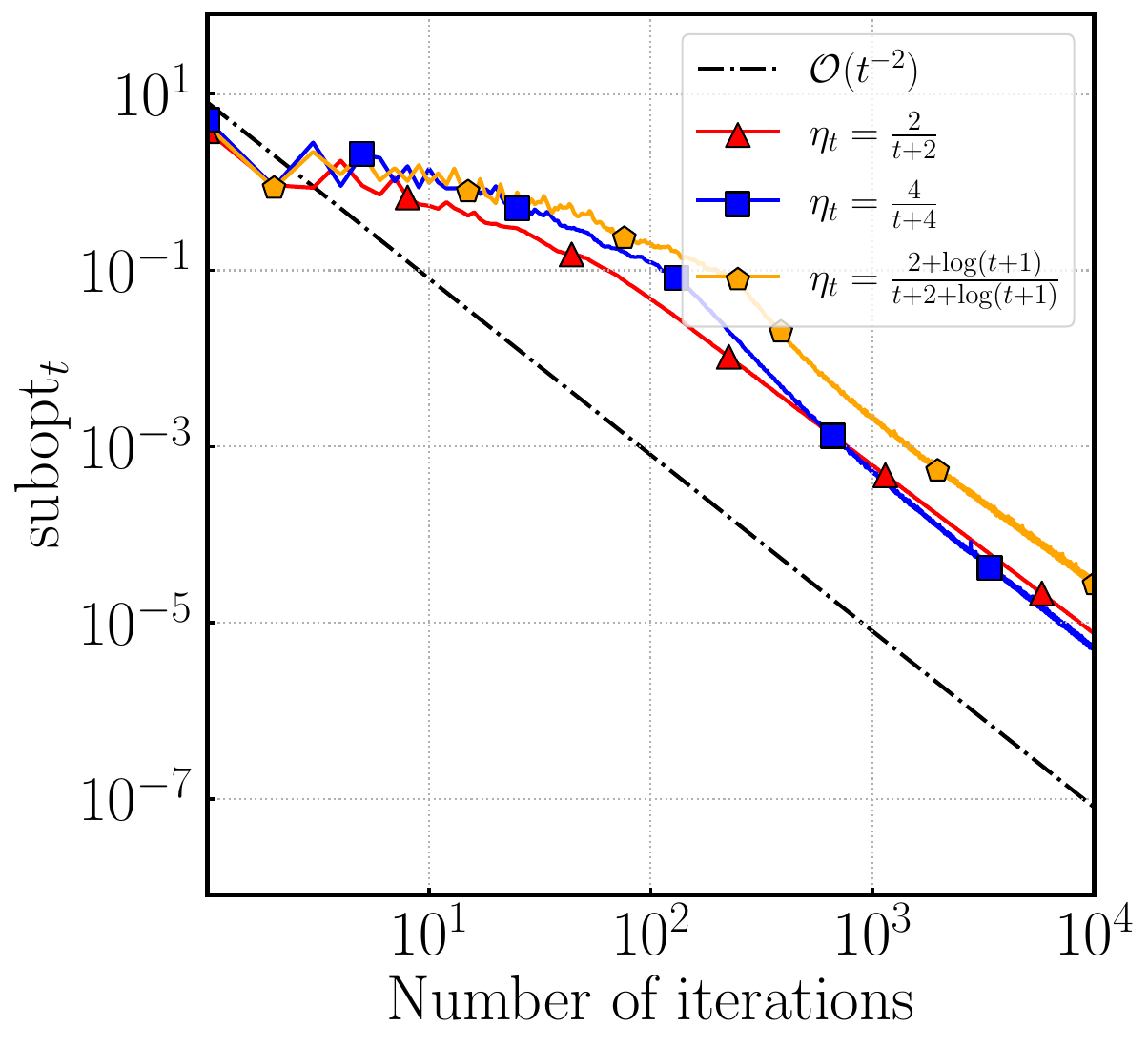}
            \caption{radius $3000$, $\subopt_t$.}\label{fig:col_fil_3000_subopt}
        \end{subfigure}
    \end{tabular}
    \caption{\textbf{Collaborative filtering over nuclear norm balls of radii $1000$ and $3000$.}
        Convergence rate comparison of \fw{} with different step-sizes for \eqref{eq.collaborative_filterting} over nuclear norm balls of radii $1000$ and $3000$ for the three different optimality measures $\gap_t$, $\primaldual_t$, and $\subopt_t$ on the movielens dataset. Axes are in log scale.
    }\label{fig:col_fil}
\end{figure}

See \cite{harper2015movielens}. The results are presented in Figure~\ref{fig:col_fil}.
For both radii and all optimality measures, the proposed step-size $\eta_t=\frac{2+\log(t+1)}{t+2+\log(t+1)}$ indeed converges at least as fast as fixed-$\ell$ step-sizes of the form $\eta_t=\frac{\ell}{t+\ell}$ for any $\ell\in\N$, up to polylogarithmic factors. Furthermore, when the radius is $1000$, Figures~\ref{fig:col_fil_1000_gap}--\ref{fig:col_fil_1000_subopt} show that for all optimality measures, the log-adaptive step-size admits arbitrarily fast sublinear convergence rates and strictly outperforms the two fixed-$\ell$ step-sizes.

\begin{remark}[Open question: growth settings of collaborative filtering]
To the best of our knowledge, it is currently not known whether collaborative filtering can be captured by any of the growth settings used throughout this paper.  Thus the accelerated convergence rates in Figures~\ref{fig:col_fil_1000_gap}--\ref{fig:col_fil_1000_subopt} and~\ref{fig:col_fil_3000_subopt} warrant further investigation. We plan to address this question in future work. Some of the authors previously encountered a similar phenomenon in \cite{wirth2023acceleration}.
\end{remark}

\section{Discussion}\label{sec.discussion}
We demonstrated theoretically and via numerical experiments that the proposed log-adaptive step-size $\eta_t=\frac{2+\log(t+1)}{t+2+\log(t+1)}$ converges at least as fast as fixed-$\ell$ step-sizes of the form $\eta_t=\frac{\ell}{t+\ell}$ for any $\ell\in\N_{\ge 2}$, up to polylogarithmic factors. Furthermore, in the strong $(M,r)$-growth settings, the log-adaptive step-size admits arbitrarily fast sublinear convergence rates and strictly outperforms fixed-$\ell$ step-sizes. Furthermore, the proposed step-size does not require more overhead than fixed-$\ell$ step-sizes. Henceforth, the proposed step-size should therefore be considered the default open-loop step-size for \fw{}. To facilitate widespread adoption, we integrated adaptive step-sizes into the \texttt{FrankWolfe.jl} package.

\subsubsection*{Acknowledgements}
Research reported in this paper was partially supported by the Deutsche Forschungsgemeinschaft (DFG, German Research Foundation) under Germany's Excellence Strategy – The Berlin Mathematics Research Center MATH$^+$ (EXC-2046/1, project ID 390685689, BMS Stipend).
Research reported in this paper was also partially supported by the Bajaj Family Chair at the Tepper School of Business, Carnegie Mellon University.

\bibliographystyle{plain}

\bibliography{bibliography.bib}

\end{document}